\newtheorem{theorem}{Theorem}[section]
\newtheorem{lemma}[theorem]{Lemma}
\newtheorem{prop}[theorem]{Proposition}
\theoremstyle{definition}
\theoremstyle{remark}
\makeatletter \@addtoreset{equation}{section} \makeatother
\makeindex \setcounter{tocdepth}{2}
\def\A{\mathcal{A}}
\def\B{\mathcal{B}}
\renewcommand\ell{l}
\def\min{\mbox{\rm min}}
\def\max{\mbox{\rm max}}
\def\And{\mbox{\rm ~and~}}
\begin{document}
	
	\begin{center}
	{\Large\bf
		Level of Faces for Exponential Sequence of Arrangements
	}\\ [7pt]
\end{center}
\vskip 3mm
\begin{center}
	Yanru Chen$^{1}$, Houshan Fu$^{2}$, Weikang Liang$^{3,*}$, Suijie Wang$^{4}$\\[8pt]
	$^{1,3,4}$School of Mathematics\\
	Hunan University\\
	Changsha 410082, Hunan, P. R. China\\[12pt]
	$^{2}$School of Mathematics and Information Science\\
	Guangzhou University\\
	Guangzhou 510006, Guangdong, P. R. China\\[12pt]
	
	 Emails: $^{1}$yanruchen@hnu.edu.cn, $^{2}$fuhoushan@gzhu.edu.cn,\\ ~~~~~~~~~
	 $^{3}$kangkang@hnu.edu.cn,
	 $^{4}$wangsuijie@hnu.edu.cn\\[15pt]
\end{center}
\vskip 3mm

\begin{abstract}
In this paper, we introduce the bivariate exponential generating function $F_l(x,y)$ for the number of  level-$l$ faces of an exponential sequence of arrangements (ESA),  and establish the  formula $F_l(x,y)=\big(F_1(x,y)\big)^l$ with a combinatorial interpretation. Its specialization at $x=0$ recovers a result first obtained by Chen {\em et al.} \cite{CWYZ2024,CFWY2025} for certain classic ESAs and later generalized to all ESAs by Southerland {\em et al.}  \cite{SSZ2025}.  As a byproduct,  we obtain that an alternating sum of the number of  level-$l$ faces is invariant with respect to the choice of ESA, and is exactly the Stirling number of the second kind.  We also extend the binomial-basis expansion theorem \cite{CWYZ2024,CFWY2025,Zhang2025}  and Stanley's formula on ESAs \cite{Stanley1996} from characteristic polynomials to Whitney polynomials.

\vskip 6pt
\noindent{\bf Keywords:} Whitney polynomial, Stirling number of the second kind, Hyperplane arrangement, exponential sequence of arrangements, \vspace{1ex}\\
{\bf Mathematics Subject Classifications:} 52C35, 05A10, 05A19
\end{abstract}

\section{Introduction}\label{Sec-1}
Let $\A$ be a hyperplane arrangement in the Euclidean space $\mathbb{R}^n$. Its \emph{intersection poset} $L(\A)$ is the set of all nonempty intersections of some hyperplanes in $\A$, ordered by reverse inclusion. A more general setting and background about hyperplane arrangements can be found in \cite{Stanley2007}. The \emph{Whitney polynomial} of $\A$ is a bivariate polynomial defined by
\begin{equation}\label{Eq1}
w(\A, x, t):=\sum_{X,Y\in L(\A),\, X\le Y}\mu(X,Y)x^{n-\dim{X}}t^{\dim{Y}},
\end{equation}
where $\mu$ is the M\"{o}bius function defined recursively by $\mu(X,X)=1$ and $\mu(X,Y)=-\sum_{X\le Z<Y}\mu(X,Z)$ for $X<Y$.  When $x=0$, it reduces to the \emph{characteristic polynomial} $\chi(\A,t):=w(\A,0,t)$. Conversely, the Whitney polynomial admits an expansion in terms of the characteristic polynomials as follows
\begin{equation}\label{Whitney2}
w(\A, x,t)=\sum_{X\in L(\A)}x^{n-\dim{X}}\chi(\A/X,t),
\end{equation}
where $\A/X:=\{H\cap X\ne\emptyset \colon H\in \A,  X \nsubseteq H\}$ is the \emph{restriction} of $\mathcal{A}$ to $X$. Each connected component of the \emph{complement} $M(\mathcal{A}/X):= X-\bigcup_{X\nsubseteq H,\, H\in\mathcal{A}}H\cap X$ is called a \emph{face} of $\A$. In particular, an $n$-dimensional face  of $\A$ is called a \emph{region}. 

First studied by Athanasiadis \cite{Athan1998} and Postnikov and  Stanley \cite{P-S2000}, a \emph{deformed braid arrangement} $\A_n$ is an arrangement in $\mathbb{R}^n$ of the form
\begin{equation}\label{deformed braid}
	\A_n \colon x_{i}-x_{j}=a_{ij}^{(1)},\ldots,a_{ij}^{(m_{ij})},\quad 1\le i<j\le n,
\end{equation}
where $m_{ij}\ge 1$ and $a_{ij}^{(1)}<\cdots<a_{ij}^{(m_{ij})}$.  Introduced by Stanley \cite{Stanley1996}, an \emph{exponential sequence $(\A_1,\A_2,\ldots)$ of arrangements} (abbreviated as ESA) consists of deformed braid arrangements $\A_n$ such that for any nonempty subset $S\subseteq [n]:=\{1,2,\ldots,n\}$.
\[L(\A_n^S)\cong L(\A_{|S|}) ,\] 
where $\A_n^S=\{H\in \A_n\colon H\text{ is parallel to }x_i-x_j=0\text{ for some }i,j\in S\}$.  The most classic ESAs include the braid arrangement, extended Shi/Catalan/semiorder arrangement, and Linial arrangement. 

Defined by Ehrenborg \cite{Ehrenborg2019},  the \emph{level} $l(A)$ of a subset $A \subseteq \mathbb{R}^n$ is the smallest dimension of all the subspaces $W$ of $\mathbb{R}^n$ such that the distances $d(\bm x,W)$ for all $\bm x\in A$ are uniformly bounded. Equivalently, there is a positive number $r$,
\begin{equation*}
A\subseteq B(W,r):=\big\{\bm x \colon d(\bm x,W)\le r\big\}.
\end{equation*}
Indeed, if the subset is a face $F$ of a hyperplane arrangement, $l(F)$  coincides with the degree of freedom of $F$ as defined by Armstrong and Rhoades in \cite{A-R2012}, and $l(F)-1$ is exactly the ideal dimension of $F$ introduced by Zaslavsky\cite{Zaslavsky2003}.  In this paper, we work with Ehrenborg's definition of the level.   For a hyperplane arrangement $\A$, 
let  $\mathcal{F}_{d,l}(\A)$ denote the set of all faces of  $\A$ with dimension $d$ and level $l$, and let $f_{d,l}(\A)=|\mathcal{F}_{d,l}(\A)|$. Obviously, $f_{d,l}(\A)=0$ when $d<l$.  For the deformed braid arrangement $\A_n$, $f_{d,0}(\A_n) = 0$ and $f_{d,1}(\A_n)$ is the number of relatively bounded $d$-dimensional faces of $\A_n$.

Associated with an ESA $(\A_1,\A_2,\ldots)$, for each level $l$, define the bivariate exponential generating function as
\[
F_{l}(x,y):=\sum_{n \ge d \ge l}f_{d,l}(\A_n)x^{n-d}\frac{y^n}{n!}.
\]
Recently, the study of $f_{d,l}$ and $F_{l}(x,y)$ for ESAs has drawn considerable attention \cite{Ehrenborg2019,CWYZ2024,CFWY2025,Zhang2025,SSZ2025}. When $\A_n$ is the extended Shi arrangement,  Ehrenborg \cite{Ehrenborg2019}  derived a counting formula for $f_{d,l}(\A_n)$ using difference operations. When $\A_n$ is the Catalan/semiorder-type arrangement, Chen {\em et al.} \cite{CWYZ2024} established an elegant formula for the exponential generating functions $F_{l}(0,y)$, i.e.,
\begin{equation}
	F_{l}(0,y)=\big(F_{1}(0,y)\big)^l.  \label{level-eq1}
\end{equation}
Additionally, Chen {\em et al.} \cite{CWYZ2024} provided an explicit formula for  $f_{d,l}(\A_n)$ when $\A_n$ is the extended Catalan arrangement. Thanks to Hetyei's labeling \cite{Hetyei2024},  Chen {\em et al.} \cite{CFWY2025}  extended the formula \eqref{level-eq1} to a broader class of ESAs. Applying Zaslavsky's convolution formula for level counting, Southerland {\em et al.} \cite{SSZ2025} proved that the number of regions of hyperplane arrangements with level $l$ is determined by its combinatorics and generalized the formula \eqref{level-eq1}  to all ESAs.  In this paper, this formula will be further extended to the bivariate generating function $F_{l}(x,y)$.  
\begin{theorem}\label{thm1}
Let $(\A_1,\A_2,\ldots)$ be an ESA and $l$ a positive integer. Then
	\[
	F_{l}(x,y)=\big(F_{1}(x,y)\big)^l.
	\]
\end{theorem}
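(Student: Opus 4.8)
The plan is to establish a combinatorial bijection at the level of faces of $\A_n$ that realizes the product $F_1(x,y)^l$ directly, rather than manipulating generating functions after the fact. The key observation is that for an ESA, a $d$-dimensional face $F$ of $\A_n$ of level $l$ should decompose canonically into an ordered partition of $[n]$ into $l$ blocks $B_1,\dots,B_l$, together with, on each block $B_i$, a face of the corresponding $\A_{|B_i|}$ that is of \emph{level one} (i.e.\ relatively bounded) in its own ambient space. The mechanism for this is the affine span of $F$: because $\A_n$ is a deformed braid arrangement, the recession cone directions of $F$ are governed by which differences $x_i-x_j$ are ``free'' along $F$, and this induces an equivalence relation on $[n]$ whose classes are exactly the blocks; within each class the face is forced to be bounded transverse to the diagonal, hence level one. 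I would first make this decomposition precise, checking that the level of $F$ equals the number of blocks and that the dimension of $F$ is the sum of the dimensions of the pieces.

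Next I would translate this bijection into generating functions. If $\widetilde f_{d,1}(\A_m)$ counts the level-one $d$-dimensional faces of $\A_m$ (so that $F_1(x,y)=\sum_{m\ge d\ge 1}\widetilde f_{d,1}(\A_m)x^{m-d}y^m/m!$), then summing over ordered set partitions of $[n]$ into $l$ labeled blocks and over the face-data on each block gives
\[
f_{d,l}(\A_n)=\sum_{\substack{B_1\sqcup\cdots\sqcup B_l=[n]\\ d_1+\cdots+d_l=d}}\ \prod_{i=1}^{l}\widetilde f_{d_i,1}\big(\A_{|B_i|}\big),
\]
which is precisely the coefficient extraction identity asserting $F_l(x,y)=F_1(x,y)^l$, once one accounts for the $1/n!$ normalization via the standard exponential-formula bookkeeping for labeled structures. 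The variable $x$ tracking codimension $n-d$ is compatible with the product because codimension is additive across the blocks (each block sits in its own copy of $\R^{|B_i|}$ and the defect $|B_i|-d_i$ sums to $n-d$).

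The main obstacle is the first step: proving that the block decomposition is well defined and that its number of blocks equals $l(F)$ in Ehrenborg's sense. One direction is easy --- given the decomposition, one builds an explicit bounding subspace $W$ of dimension (number of blocks) by taking, on each block, the diagonal line, so $l(F)$ is at most the number of blocks. The reverse inequality is the delicate part: one must show no lower-dimensional $W$ can uniformly bound $F$, which amounts to showing the recession directions of $F$ genuinely span an $(n-\ell)$-dimensional space where $\ell$ is the number of blocks, and here one uses the ESA hypothesis $L(\A_n^S)\cong L(\A_{|S|})$ to guarantee that within a block the relevant subarrangement really does cut out a relatively bounded face and contributes no extra recession directions. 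I would handle this by analyzing the lineality and recession cone of the polyhedron $F$ in terms of the defining affine hyperplanes $x_i-x_j=a$, reducing the claim to the connected components of an auxiliary graph on $[n]$ whose edges record which pairs $\{i,j\}$ have $x_i-x_j$ bounded on $F$; the ESA condition ensures this graph's components behave coherently under restriction. Once this structural lemma is in place, the generating-function identity follows formally.
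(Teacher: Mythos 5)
Your proposal takes the bijective route, which differs from the paper's primary proof of Theorem~\ref{thm1}. In Section~\ref{Sec2-1} the paper deduces the factorization from Zaslavsky's convolution formula for level counting (Lemma~\ref{Zas-level}, from \cite{Zaslavsky2003}): $f_{d,l}(\A_n)$ is written as a sum over $l$-dimensional flats $X$ of the centralization $\underline{\A_n}$ (the braid arrangement), using $r(\underline{\A_n}/X)=l!$ and a product decomposition of the localization $(\A_n)_X$ (Lemma~\ref{bounded}). That argument black-boxes precisely the geometric fact you flag as the ``delicate part,'' because Zaslavsky already carries the burden of relating level to the flats of $\underline{\A}$. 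The decomposition you describe is instead the content of the paper's secondary, bijective proof in Section~\ref{Sec4}: for a face $F$, the blocks are the strong components of a digraph $G(F)$ on $[n]$ built \`a la Hetyei (an edge $i\to j$ when $x_i - x_j \ge a_{ij}^{(1)}$, etc.), which is equivalent to your ``bounded-difference'' equivalence relation. Proposition~\ref{component-number} then proves that the number of strong components equals $l(F)$, and Theorem~\ref{Bijection} establishes the bijection to $(\pi, F_1,\ldots,F_l)$. So your plan is a legitimate alternative route, and in fact the paper's own combinatorial-interpretation section; what the Zaslavsky route buys is brevity (no need to reprove the level computation), while the bijective route buys transparency and an explicit correspondence.

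Two concrete issues with your sketch of the hard step. First, a dimension slip: to get $l(F)\ge \ell$ you want the recession directions of $F$ to span an $\ell$-dimensional space, not an $(n-\ell)$-dimensional one, since any bounding subspace must contain the linear span of the recession cone. The paper realizes this by exhibiting an $\ell$-dimensional cone $C$ inside the diagonal subspace $W$ with $\bm z + C \subseteq F$, then appealing to the monotonicity of level under $X\subseteq B(Y,r)$. Second, the claim that each block's restricted face is automatically level one needs an argument: what the projection of $F$ to a block's coordinates hands you a priori is a point in some face $F_p$ of $\A_{|\pi_p|}$, and one must show $F_p$ itself is relatively bounded; the paper gets this from $G(F_p)\cong G_p(F)$ being strongly connected plus a second application of Proposition~\ref{component-number}. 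Finally, the well-definedness, injectivity, and surjectivity of the map $F\mapsto(\pi,F_1,\ldots,F_l)$ (in particular the additivity of dimension across blocks, and the separating-hyperplane argument for injectivity) require care that your sketch does not yet supply; these are carried out in the proof of Theorem~\ref{Bijection}.
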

Applying Theorem \ref{thm1}, we obtain in Theorem \ref{stirling} that an alternating sum of the number of  level-$l$ faces is invariant with respect to the choice of ESA, and it coincides with the Stirling number $S(n,l)$ of the second kind. A combinatorial interpretation of this result would be of great interest.
\begin{theorem}\label{stirling}
Let $(\A_1,\A_2,\ldots)$ be an ESA. Then
\[
\sum_{n\ge d\ge l}(-1)^{d-l}f_{d,l}(\A_n)=l!S(n,l).
\]
\end{theorem}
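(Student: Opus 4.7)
The plan is to specialize the identity of Theorem~\ref{thm1} at $x=-1$ and extract coefficients. Writing $F_l(-1, y) = \bigl(F_1(-1, y)\bigr)^l$, it suffices to identify $F_1(-1, y)$ in closed form; the target is $F_1(-1, y) = 1 - e^{-y}$, independently of the choice of ESA. Granting this, one has $F_l(-1, y) = (1 - e^{-y})^l$, and the exponential generating function $(e^z - 1)^l = l!\sum_{n \ge l} S(n, l)\,z^n/n!$ at $z = -y$ yields
\[
(1 - e^{-y})^l = l!\sum_{n \ge l}(-1)^{n-l} S(n, l)\,\frac{y^n}{n!}.
\]
Matching the coefficient of $y^n/n!$ with $\sum_{d \ge l}(-1)^{n-d} f_{d, l}(\A_n)$ from the definition of $F_l(-1,y)$ and multiplying by $(-1)^{n-l}$ --- which via $(-1)^{d-l} = (-1)^{n-l}(-1)^{n-d}$ turns $(-1)^{n-d}$ into $(-1)^{d-l}$ --- delivers the theorem.

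The main task is to prove $F_1(-1, y) = 1 - e^{-y}$, equivalently $\sum_{d=1}^n (-1)^{d-1} f_{d, 1}(\A_n) = 1$ for every $n \ge 1$. Under the ESA hypothesis $m_{ij} \ge 1$, every flat $X \in L(\A_n)$ and every hyperplane of $\A_n$ contain the line $\mathbb{R}\bm{v}$ with $\bm{v} = (1, \ldots, 1)$; moreover a direct direction-subspace computation shows the invariance subspace of the restriction $\A_n/X$ is exactly $\mathbb{R}\bm{v}$, so $\A_n/X$ has rank $\dim X - 1$. A $d$-dimensional face of $\A_n$ lying in $X$ is a region of $\A_n/X$, closed under translation by $\bm{v}$, and has Ehrenborg level~$1$ iff its projection to $\mathrm{ess}(\A_n/X)$ is a bounded region. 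Applying Zaslavsky's bounded-region formula to this essentialization together with the identity $\chi(\A_n/X, 1) = \chi(\mathrm{ess}(\A_n/X), 1)$ shows that the number of level-$1$ faces of $\A_n$ lying in $X$ equals $(-1)^{d-1}\chi(\A_n/X, 1)$.

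Summing over flats of dimension $d$ and invoking formula \eqref{Whitney2},
\[
\sum_{d=1}^n (-1)^{d-1} f_{d, 1}(\A_n) = \sum_{X \in L(\A_n)} \chi(\A_n/X, 1) = w(\A_n, 1, 1).
\]
The proof is closed by the combinatorial identity $w(\A_n, 1, 1) = 1$: by the dual M\"obius relation, for each $Y \in L(\A_n)$,
\[
\sum_{X \in L(\A_n),\,X \le Y} \mu(X, Y) = [\mathbb{R}^n = Y],
\]
since $\mathbb{R}^n$ is the unique minimum $\hat 0$ of $L(\A_n)$, and summing over $Y$ leaves only the $Y = \mathbb{R}^n$ term. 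The hardest step is the level-versus-boundedness correspondence in the second paragraph: one must carefully justify that Ehrenborg's level-$1$ condition on a face coincides with being bounded modulo $\mathbb{R}\bm{v}$ on the containing flat. This rests on the fact that every face of an ESA is closed under translation by $\bm{v}$, which forces any witnessing subspace $W$ in Ehrenborg's definition to contain the direction $\bm{v}$ and thus to equal $\mathbb{R}\bm{v}$ when $\dim W = 1$.
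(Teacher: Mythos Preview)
Your proof is correct and follows essentially the same route as the paper: both reduce to the key identity $\sum_{d=1}^{n}(-1)^{d-1}f_{d,1}(\A_{n})=w(\A_n,1,1)=1$ (established via Zaslavsky's bounded-region formula and the M\"obius-sum collapse), and then apply Theorem~\ref{thm1} and compare coefficients. The only cosmetic difference is that you package the endgame as $F_1(-1,y)=1-e^{-y}$ together with the standard Stirling EGF $(e^z-1)^l=l!\sum_{n\ge l}S(n,l)\,z^n/n!$, whereas the paper expands the $l$th power directly via multinomials.
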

Let $f_l(\A_n)=f_{n,l}(\A_n)$ denote the number of regions of $\A_n$ with level $l$. Chen {\em et al.} \cite{CFWY2025} observed an interesting phenomenon: when $\A_n$ is the Catalan/semiorder-type arrangement, the characteristic polynomial $\chi(\A_n,t)$ has $f_l(\A_n)$ as the coefficient in its expansion in the polynomial basis  $\binom{t}{l}$. Soon after, Zhang \cite{Zhang2025} extended this result to all deformed braid arrangements  by induction. In this paper, we further  extend Zhang's result from the characteristic polynomial to the Whitney polynomial. An analogous result concerning the Whitney polynomial of non-degenerate deformations of the Coxeter arrangement of type $B$ will be stated in Theorem \ref{typeB}.
\begin{theorem}\label{thm2}
Let $\A_n$ be a deformed braid arrangement. Then
	\[
	w(\A_n,x,t)=\sum_{n \ge d \ge l \ge 1}(-1)^{d-l}f_{d,l}(\A_{n})x^{n-d}\binom{t}{l}.
	\]
\end{theorem}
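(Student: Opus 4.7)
The plan is to derive Theorem \ref{thm2} from Zhang's binomial-basis expansion of the characteristic polynomial in \cite{Zhang2025} (which holds for every deformed braid arrangement), combined with the characteristic-to-Whitney decomposition \eqref{Whitney2} and a level-preserving version of the standard face-flat bijection.

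First, I would expand $w(\A_n, x, t)$ using \eqref{Whitney2}:
\[
w(\A_n, x, t) = \sum_{X \in L(\A_n)} x^{n - \dim X}\, \chi(\A_n/X, t).
\]
Each flat $X$ of a deformed braid arrangement corresponds to a set partition of $[n]$ together with offset data, and parametrizing $X$ by block representatives exhibits $\A_n/X$ as a deformed braid arrangement on $\dim X$ variables. Zhang's theorem then gives
\[
\chi(\A_n/X, t) = \sum_{l \ge 1} (-1)^{\dim X - l}\, f_l(\A_n/X)\, \binom{t}{l}.
\]
Substituting and grouping terms by $d = \dim X$ produces
\[
w(\A_n, x, t) = \sum_{l \ge 1} \binom{t}{l} \sum_{d \ge l} (-1)^{d - l}\, x^{n - d} \sum_{\substack{X \in L(\A_n) \\ \dim X = d}} f_l(\A_n/X).
\]

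Second, I would prove the refined enumerative identity
\[
\sum_{\substack{X \in L(\A_n) \\ \dim X = d}} f_l(\A_n/X) = f_{d,l}(\A_n).
\]
Every $d$-dimensional face $F$ of $\A_n$ has a unique affine span $X(F) \in L(\A_n)$ of dimension $d$ and is itself a region of $\A_n/X(F)$; conversely every region of $\A_n/X$ with $\dim X = d$ is a $d$-dimensional face of $\A_n$. The level refinement of this bijection requires that the level of $F$ as a subset of $\mathbb{R}^n$ equals its level as a region of $\A_n/X \subseteq X$. Combining the two steps will then yield Theorem \ref{thm2} on the nose, with the range $l \ge 1$ coming for free since a deformed braid arrangement in $\mathbb{R}^n$ has no zero-dimensional flat.

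The main obstacle I anticipate is the level-preservation step. Given a linear subspace $W \subseteq \mathbb{R}^n$ of dimension $l$ with $F \subseteq B(W, r)$, I would orthogonally project $W$ onto the direction subspace $V$ of $X$ to obtain $W' \subseteq V$ of dimension at most $l$; a short calculation, using that the level condition is invariant under translations of the witness subspace (so the affine offset of $X$ only enlarges the tubular radius), then shows $F \subseteq B(W', r')$ for some $r' \ge r$, which gives the required inequality in one direction. The reverse inequality is immediate since every subspace of $V$ is already a subspace of $\mathbb{R}^n$. Once this geometric step is secured, all remaining manipulations are routine rearrangements of the sums.
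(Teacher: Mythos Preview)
Your proposal is correct and follows essentially the same route as the paper: expand $w(\A_n,x,t)$ via \eqref{Whitney2}, apply Zhang's binomial-basis expansion to each $\chi(\A_n/X,t)$ (using that $\A_n/X$ is again a deformed braid arrangement, which the paper cites from \cite[Lemma~3.1]{Zhang2025}), and then collapse the sum over flats of fixed dimension using the face--flat bijection $F\mapsto \mathrm{aff}(F)$. The only difference is that the paper records the identity $f_{d,l}(\A_n)=\sum_{\dim X=d}f_{d,l}(\A_n/X)$ as a bare ``fact'' and leaves the compatibility of levels (in $\mathbb{R}^n$ versus after identifying $X$ with $\mathbb{R}^{\dim X}$) implicit, whereas you single this out as the one nontrivial point and sketch a valid orthogonal-projection argument for it; your extra care here is warranted and fills in exactly the step the paper elides.
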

For any ESA $(\A_1,\A_2,\ldots)$, Stanley \cite{Stanley1996} established a nice relation between the characteristic polynomials  and the regions of $\A_n$, as follows:
\[
1+\sum_{n\ge 1}\chi(\mathcal{A}_n,t)\frac{y^n}{n!}=\Big(1+\sum_{n\ge 1}(-1)^nr(\mathcal{A}_n)\frac{y^n}{n!}\Big)^{-t},
\]
where $r(\mathcal{A}_n)$ denotes the number of regions of $\mathcal{A}_n$. In this paper, it will be extended to a relation between the Whitney polynomials and the faces of $\A_n$. Particularly, taking $x=0$ in Theorem \ref{thm3}, we can recover Stanley's result.
\begin{theorem}\label{thm3} 
	Let $(\A_1,\A_2,\ldots)$ be an ESA and $F(x,y)=\sum_{l\ge 1}F_{l}(x,y)$. Then
	\[
	1+ \sum_{n\ge 1}w(\A_n,x,t) \frac{y^n}{n!}=\big(1-F_{1}(-x,-y)\big)^{t}=\big(1+F(-x,-y)\big)^{-t}.
	\]
\end{theorem}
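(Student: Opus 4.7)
The plan is to derive Theorem \ref{thm3} directly from Theorems \ref{thm1} and \ref{thm2} together with the binomial series; no genuinely new geometric input is needed. In outline, Theorem \ref{thm2} expresses $w(\A_n,x,t)$ in the basis $\binom{t}{l}$ with coefficients built from $f_{d,l}(\A_n)$, while Theorem \ref{thm1} tells us that the generating function $F_l$ factors multiplicatively as $F_1^l$, so the sum over $l$ will collapse to a power of $1-F_1(-x,-y)$.

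First, I would substitute the expansion
\[
w(\A_n,x,t)=\sum_{n\ge d\ge l\ge 1}(-1)^{d-l}f_{d,l}(\A_n)x^{n-d}\binom{t}{l}
\]
into $1+\sum_{n\ge 1}w(\A_n,x,t)y^n/n!$ and interchange the two summations to pull $\binom{t}{l}$ outside. The basic sign computation $(-x)^{n-d}(-y)^n=(-1)^d x^{n-d}y^n$ identifies $F_l(-x,-y)=\sum_{n\ge d\ge l}(-1)^d f_{d,l}(\A_n)x^{n-d}y^n/n!$, and combined with $(-1)^{d-l}=(-1)^d(-1)^l$ this shows that the inner double sum equals $(-1)^l F_l(-x,-y)$. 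Hence the whole generating function becomes $\sum_{l\ge 0}\binom{t}{l}(-1)^l F_l(-x,-y)$, where the $l=0$ summand absorbs the leading $1$ via the convention $F_0\equiv 1$.

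Next, I would invoke Theorem \ref{thm1} to replace $F_l(-x,-y)$ by $F_1(-x,-y)^l$, so the sum turns into $\sum_{l\ge 0}\binom{t}{l}(-F_1(-x,-y))^l=(1-F_1(-x,-y))^t$ by the binomial series, which is the first claimed identity. The second equality is then purely formal: once again by Theorem \ref{thm1}, $1+F(x,y)=\sum_{l\ge 0}F_1(x,y)^l=1/(1-F_1(x,y))$, and substituting $(x,y)\mapsto(-x,-y)$ yields $(1+F(-x,-y))^{-t}=(1-F_1(-x,-y))^t$.

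The main obstacle is really Theorem \ref{thm2} itself, which does the genuine combinatorial work; granted that and Theorem \ref{thm1}, the proof of Theorem \ref{thm3} reduces to careful sign tracking and a single invocation of the binomial series. As a consistency check, specializing $x=0$ reduces $F(0,-y)$ to $\sum_{n\ge 1}(-1)^n r(\A_n)y^n/n!$, so the identity collapses on the nose to Stanley's formula, confirming the claim advertised in the statement.
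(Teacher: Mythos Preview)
Your proposal is correct and follows essentially the same route as the paper: both substitute the expansion of Theorem~\ref{thm2} into the exponential generating function, use Theorem~\ref{thm1} to reduce the inner sum to powers of $F_1(-x,-y)$, and then invoke the binomial series, with the second equality obtained from the geometric-series identity $1+F=\frac{1}{1-F_1}$. Your write-up is in fact slightly cleaner than the paper's, since you recognize the inner sum directly as $(-1)^l F_l(-x,-y)$ and then apply $F_l=F_1^l$, whereas the paper unpacks $f_{d,l}(\A_n)$ via the multinomial formula from the proof of Theorem~\ref{thm1} before recombining; the content is the same.
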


The remainder of the paper is organized as follows. Section \ref{Sec2} is devoted to proving the main results, Theorem \ref{thm1}-\ref{thm3}. 
In Section \ref{Sec4}, we provide a combinatorial interpretation on Theorem \ref{thm1} by establishing an explicit bijection.
	
\section{Proofs of  main results}\label{Sec2}
Let $\A=\{H_i:\bm\alpha_i\cdot\bm x=a_i\mid i=1,\ldots,m\}$ be a hyperplane arrangement in $\mathbb{R}^n$, and $W(\mathcal{A})$ denote the vector space spanned by all the normal vectors $\bm\alpha_i$ of hyperplanes in $\mathcal{A}$. A face $ F$ of $\mathcal{A}$ is \emph{relatively bounded} if $F \cap W(\mathcal{A})$ is bounded. Denote by $b_{d}(\A)$ the number of relatively bounded faces of $\mathcal{A}$ with dimension $d$. In particular, $b_n(\mathcal{A})$ is the number of its relatively bounded regions and $r(\A)$ denotes the total number of its regions. The celebrated Zaslavsky's formula \cite{Zaslavsky1975} states that
\begin{equation}\label{Zas}
b_n(\mathcal{A}) = |\chi(\mathcal{A},1)|, \quad\quad r(\mathcal{A}) = |\chi(\mathcal{A},-1)|.
\end{equation}
\subsection{Proof of Theorem \ref{thm1}}	\label{Sec2-1}
Note that for a deformed braid arrangement $\mathcal{A}_n$, $W(\mathcal{A}_n)=\{(x_1,\ldots,x_n)\in \mathbb{R}^n\colon x_1+\cdots+x_n=0 \}$ is a subspace of codimension $1$. Thus, a face of $\mathcal{A}_n$ is relatively bounded if and only if its level is $1$. Immediately, we have for $1\le d\le n$,
\begin{equation}
\label{eq3}
f_{d,1}(\mathcal{A}_n)=b_d(\mathcal{A}_n).
\end{equation}
	
	For any hyperplane $H\colon\bm\alpha\cdot \bm x=a$ in $\mathbb{R}^n$, the \emph{centralization} of $H$ is the corresponding linear hyperplane $\underline{H}\colon \bm\alpha\cdot \bm x=0$. Naturally, the \emph{centralization} $\underline{\A}$ of $\A$ is a linear arrangement in $\mathbb{R}^n$ consisting of centralizations of all hyperplanes in $\mathcal{A}$, i.e., $\underline{\A}=\{\underline{H} \colon H\in \A\}$. For a linear subspace $V\subseteq\mathbb{R}^n$, the \emph{localization} of $\A$ at $V$ is a subarrangement of $\mathcal{A}$ defined by	
	\[
	\A_V:=\{H\in\A\colon V\subseteq H \text{ or }V\cap H=\emptyset\}.
	\]
Zaslavsky \cite{Zaslavsky2003} further derived a  convolution formula for level counting. 

\begin{lemma}[\cite{Zaslavsky2003}, Theorem 3.1]\label{Zas-level}
Let $\mathcal{A}$ be a hyperplane arrangement in $\mathbb{R}^n$, and $0\le l\le d\le n$. Then
\[
f_{d,l}(\A)=\sum_{X\in L(\underline{\A}),\,\dim(X)=l}r(\underline{\A}/X)b_d(\A_X).
\]
\end{lemma}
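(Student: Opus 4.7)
The plan is to prove the formula bijectively, matching each level-$l$, dimension-$d$ face of $\A$ with a triple $(X,G,R)$ of the form appearing in the sum. For any face $F$ of $\A$, write $Y=\operatorname{aff}(F)\in L(\A)$ and let $\operatorname{rec}(\bar F)\subseteq\underline{Y}$ be the recession cone of its closure; this is the polyhedral cone carved out by the conditions $\bm\alpha_H\cdot v\ge 0$ or $\le 0$ coming from the hyperplanes $H\in\A$ with $Y\nsubseteq H$. Define the \emph{recession subspace} $X(F)$ to be the linear span of $\operatorname{rec}(\bar F)$; concretely, $X(F)$ is the intersection of $\underline{Y}$ with those $\underline H$ whose half-space inequality is an equality on $\operatorname{rec}(\bar F)$, so $X(F)\in L(\underline{\A})$. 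Directly from Ehrenborg's definition of level, a subspace $W$ satisfies $F\subseteq W+B(0,r)$ for some $r>0$ if and only if $\operatorname{rec}(\bar F)\subseteq W$; hence the minimal such $W$ is $X(F)$, giving $\dim X(F)=l(F)$.

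Fix $X\in L(\underline{\A})$ of dimension $l$ and let $\mathcal{F}_{d,X}(\A)=\{F\in\mathcal{F}_{d,l}(\A):X(F)=X\}$. I would then establish a bijection
\[
\mathcal{F}_{d,X}(\A)\;\longleftrightarrow\;\{\text{relatively bounded $d$-dim faces of }\A_X\}\times\{\text{regions of }\underline{\A}/X\}.
\]
In the forward direction, send $F$ to $(G(F),R(F))$, where $G(F):=F+X$ and $R(F)$ is the region of $\underline{\A}/X$ containing the relative interior of $\operatorname{rec}(\bar F)$ inside $X$. Since every hyperplane of $\A_X$ is $X$-translation-invariant, $G(F)$ is the unique face of $\A_X$ containing $F$ and has the same dimension $d$; because $W(\A_X)\subseteq X^\perp$ while every unbounded direction of $F$ lies in $X$, the projection of $G(F)$ onto $W(\A_X)$ is bounded, so $G(F)$ is relatively bounded. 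Moreover, the walls of $\underline{\A}/X$ are precisely the hyperplanes $\underline H\cap X$ for $H\in\A\setminus\A_X$, which are also the hyperplanes cutting out $\operatorname{rec}(\bar F)$ inside $X$; since $\operatorname{rec}(\bar F)$ has linear span $X$, its relative interior is a full-dimensional open cone lying in a unique region of $\underline{\A}/X$, which is $R(F)$.

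For the inverse, given $(X,G,R)$, cut $G$ by the hyperplanes $H\in\A\setminus\A_X$ (each of which meets $G$ in an affine hyperplane of $G$, since $X\nsubseteq\underline H$ forces $\bm\alpha_H$ to be non-constant on each $X$-coset) and select the unique face $F\subseteq G$ that ``matches $R$ at infinity'': for any direction $v$ in the relative interior of $R$ and any $g\in G$, the ray $g+tv$ enters and remains in $F$ for all sufficiently large $t>0$. Existence and uniqueness of such $F$ come from the asymptotic fact that $\operatorname{sign}(\bm\alpha_H\cdot(g+tv)-a_H)\to\operatorname{sign}(\bm\alpha_H\cdot v)$ as $t\to\infty$, the latter being the sign pattern defining the region $R$ of $\underline{\A}/X$. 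Summing the resulting fiber cardinalities over $X$ of dimension $l$ then gives $f_{d,l}(\A)=\sum_{X}r(\underline{\A}/X)\,b_d(\A_X)$. The main obstacle is verifying that these forward and backward rules are genuinely mutual inverses, and in particular that the face $F$ reconstructed from $(X,G,R)$ has recession subspace equal to $X$ rather than some strictly larger element of $L(\underline{\A})$; this will require using the relative boundedness of $G$ modulo $X$ to rule out any additional unbounded directions of $F$ outside $X$.
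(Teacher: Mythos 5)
The paper itself does not prove this lemma: it is quoted verbatim from Zaslavsky (2003, Theorem 3.1), where it is obtained by a counting argument organized around the centralization, not by an explicit bijection. So your proposal is an independent, genuinely bijective route, and its skeleton is sound: the identification $l(F)=\dim\operatorname{span}\operatorname{rec}(\bar F)$, the fact that this span is the intersection of $\underline{Y}$ with the implicit-equality hyperplanes and hence lies in $L(\underline{\A})$, and the idea that a pair consisting of a relatively bounded face of $\A_X$ and a region of $\underline{\A}/X$ records exactly the sign vector of $F$ on $\A_X$ and on $\A\setminus\A_X$, respectively, are all correct and do assemble into a proof.

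However, as written the argument has a genuine hole, and it is exactly the one you flag and leave open: that the face $F$ reconstructed from $(X,G,R)$ satisfies $X(F)=X$. Without it the fibers are not matched and the count does not follow. The fix is short, and it is worth seeing why, because it is the one place where the hypothesis $X\in L(\underline{\A})$ is indispensable: since $X$ is an intersection of hyperplanes of $\underline{\A}$ and $\A_X=\{H\in\A\colon X\subseteq\underline{H}\}$, one has $W(\A_X)^{\perp}=\bigcap_{H\in\A_X}\underline{H}=X$; relative boundedness of $G$ in $\A_X$ is equivalent to $\operatorname{rec}(\bar G)=W(\A_X)^{\perp}=X$; and $\bar F\subseteq\bar G$ gives $\operatorname{rec}(\bar F)\subseteq\operatorname{rec}(\bar G)=X$, while $R\subseteq\operatorname{rec}(\bar F)$ (each defining inequality of $\bar F$ is an equality on $X$ for $H\in\A_X$ and strict on $R$ for $H\notin\A_X$) gives $\operatorname{span}\operatorname{rec}(\bar F)=X$. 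For a subspace not in $L(\underline{\A})$ the claim would indeed fail, so your worry was well placed but resolvable in two lines. A second, smaller lacuna: your justification that $G(F)=F+X$ is a face of $\A_X$ (translation invariance of its hyperplanes) only yields $F+X\subseteq C$, where $C$ is the face of $\A_X$ containing $F$; for the reverse inclusion take $w\in\operatorname{relint}\operatorname{rec}(\bar F)$, note that $\epsilon_H\,\bm\alpha_H\cdot w>0$ for every $H\in\A\setminus\A_X$ (a linear functional nonnegative on a cone and not identically zero on its span $X$ is positive on the relative interior), and observe that any $c\in C$ satisfies $c+tw\in F$ for $t$ large, so $c\in F+X$; the same positivity is also what guarantees that $\operatorname{relint}\operatorname{rec}(\bar F)$ avoids all hyperplanes of $\underline{\A}/X$, so $R(F)$ is well defined. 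You should also record the routine check that the reconstructed $F$ has dimension $d$ (its zero sign entries coincide with those of $G$, so $\operatorname{aff}(F)=\operatorname{aff}(G)$). With these repairs your bijection, and hence the formula, goes through.
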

	
Notice that for any deformed braid arrangement $\A_n$, its centralization $\underline{\A_n}$ coincides with the classical braid arrangement in $\mathbb{R}^n$, whose hyperplanes are given by: $x_i-x_j=0$ for $1\le i<j\le n$. Thus, every element $X \in L(\underline{\A_n})$ leads to a natural partition $P(X) = \{\pi_1, \ldots, \pi_l\}$ of the set $[n]$ such that $l = \dim(X)$, and $i,j\in [n]$ lie in the same block $\pi_k$ if and only if the hyperplane $H : x_i-x_j =0$ contains $X$. With such a partition, we can establish the next lemma.
	
	\begin{lemma}\label{bounded}
		Let $(\A_1,\A_2,\ldots)$ be an ESA. Then for any $X\in L(\underline{\A_n})$ with $P(X)=\{\pi_1,\ldots,\pi_l\}$ and $d\ge l$, we have
		\[
		b_d\big((\A_n)_X\big)=\sum_{\substack{d_1+\cdots+d_l=d,\\d_1,\ldots,d_l\ge 1}}\prod_{i=1}^lb_{d_i}(\A_{|\pi_i|}).
		\]
	\end{lemma}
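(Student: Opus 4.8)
The plan is to analyze the localized arrangement $(\A_n)_X$ directly and show it decomposes, up to the relevant combinatorics, as a product arrangement whose factors are copies of the $\A_{|\pi_i|}$. Fix $X\in L(\underline{\A_n})$ with associated partition $P(X)=\{\pi_1,\ldots,\pi_l\}$. First I would observe that a hyperplane $H: x_i-x_j=a$ of $\A_n$ belongs to $(\A_n)_X$ if and only if $i$ and $j$ lie in the same block $\pi_k$: indeed $X\subseteq \underline{H}$ forces $i,j$ to be in the same block by the definition of $P(X)$, while if $i,j$ are in distinct blocks then the line $X$ is not contained in $\underline H$ and meets it, so $X\cap H\ne\emptyset$ and $H\notin(\A_n)_X$. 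Hence $(\A_n)_X=\bigsqcup_{k=1}^l \A_n^{\pi_k}$ in the notation of the ESA definition, and these sub-arrangements involve disjoint sets of coordinates $\{x_i : i\in\pi_k\}$ (the complementary coordinates being entirely free).

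Next I would use the ESA hypothesis. Since $L(\A_n^{\pi_k})\cong L(\A_{|\pi_k|})$ and, more to the point, $\A_n^{\pi_k}$ as an arrangement in the coordinate subspace $\mathbb{R}^{\pi_k}$ is (affinely isomorphic to) $\A_{|\pi_k|}$ in $\mathbb{R}^{|\pi_k|}$, the arrangement $(\A_n)_X$ is a product, in the usual sense of arrangement products, of $\A_{|\pi_1|},\ldots,\A_{|\pi_l|}$ together with $n-\sum|\pi_i|$ extra free coordinates — but note $\sum_i|\pi_i|=n$, so there are no extra free coordinates beyond those already inside each factor's ambient space. The key structural point is then that $W\big((\A_n)_X\big)=\bigoplus_{k=1}^l W(\A_n^{\pi_k})$, a direct sum of the normal spaces of the factors, so relative boundedness of a face of $(\A_n)_X$ is equivalent to relative boundedness of each of its factor-faces.

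I would then invoke the standard multiplicative behavior of faces under arrangement products: the faces of a product $\mathcal{B}\times\mathcal{C}$ are exactly products $G\times G'$ of a face $G$ of $\mathcal{B}$ with a face $G'$ of $\mathcal{C}$, with $\dim(G\times G')=\dim G+\dim G'$, and $G\times G'$ is relatively bounded iff both $G$ and $G'$ are. Iterating over the $l$ factors and summing over the ways the target dimension $d$ splits as $d_1+\cdots+d_l$ with each $d_i\ge 1$ (each $d_i\ge 1$ because a nonempty face of $\A_{|\pi_i|}$ in its full ambient space $\mathbb{R}^{|\pi_i|}$ has dimension $\ge 1$, as $\A_{|\pi_i|}$ contains no hyperplane through the full intersection lattice top — more simply, every face of a deformed braid arrangement $\A_m$ with $m\ge 1$ has dimension $\ge 1$ since the line $x_1=\cdots=x_m$ direction is never cut out), gives
\[
b_d\big((\A_n)_X\big)=\sum_{\substack{d_1+\cdots+d_l=d\\ d_1,\ldots,d_l\ge 1}}\prod_{i=1}^l b_{d_i}(\A_{|\pi_i|}),
\]
which is the claim.

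The main obstacle, and the one step I would spell out carefully, is justifying that $\A_n^{\pi_k}$ really is the arrangement $\A_{|\pi_k|}$ realized on the coordinate subspace indexed by $\pi_k$ — the ESA axiom as stated only asserts an isomorphism of intersection posets $L(\A_n^S)\cong L(\A_{|S|})$, whereas here I need the stronger geometric identification (same face structure, same relative-boundedness data). I expect this is available from Stanley's original framework for ESAs, where the isomorphism is realized by the coordinate-relabeling map sending $S$ to $[|S|]$, so that $\A_n^{\pi_k}$ and $\A_{|\pi_k|}$ are genuinely affinely equivalent arrangements and not merely combinatorially so; alternatively one could argue that $b_d$ depends only on the intersection poset via Zaslavsky-type formulas applied levelwise. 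The remaining pieces — the identification of which hyperplanes lie in $(\A_n)_X$, the direct-sum decomposition of $W$, and the product formula for (relatively bounded) faces — are routine.
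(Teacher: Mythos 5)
Your argument is correct, but the route is genuinely different from the paper's. You argue geometrically: you first identify exactly which hyperplanes of $\A_n$ survive localization at $X$ (those $x_i-x_j=a$ with $i,j$ in the same block of $P(X)$), observe that $(\A_n)_X=\bigsqcup_k \A_n^{\pi_k}$ is a product arrangement over the coordinate decomposition $\mathbb{R}^n=\bigoplus_k\mathbb{R}^{\pi_k}$, note $W\bigl((\A_n)_X\bigr)=\bigoplus_k W(\A_n^{\pi_k})$, and then use the multiplicativity of (relatively bounded) faces under products. The paper instead works entirely on the poset side: it writes $b_d\bigl((\A_n)_X\bigr)$ as the absolute value of a coefficient of the Whitney polynomial $w\bigl((\A_n)_X,x,1\bigr)$ (Zaslavsky), cites a lemma of Southerland et al.\ giving $L\bigl((\A_n)_X\bigr)\cong L(\A_{|\pi_1|})\times\cdots\times L(\A_{|\pi_l|})$, and then uses multiplicativity of the Whitney polynomial across poset products; the identity drops out by comparing coefficients. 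Your geometric picture is more explicit (it essentially reproves the cited poset-factorization lemma from scratch) and makes the $d_i\ge 1$ constraint transparent; the paper's algebraic route avoids any reasoning about actual face shapes or product structure on $\mathbb{R}^n$ and reduces everything to poset data in one step.

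You have correctly put your finger on the one real subtlety: the ESA axiom gives only $L(\A_n^{\pi_k})\cong L(\A_{|\pi_k|})$, not an affine equivalence, so $\A_n^{\pi_k}$ (viewed in $\mathbb{R}^{\pi_k}$) need not literally be $\A_{|\pi_k|}$. Your fallback --- that $b_{d_k}$ is determined by the intersection poset via Zaslavsky-type formulas --- is exactly the resolution the paper uses; it is cleanest to phrase it as $b_{d_k}$ being the absolute value of a coefficient of $w(\,\cdot\,,x,1)$, which is manifestly a poset invariant. With that patch made explicit, your proof is complete and correct. One small cleanup worth making: in the product decomposition, $\A_n^{\pi_k}$ must be read as an arrangement in the coordinate subspace $\mathbb{R}^{\pi_k}$ rather than in $\mathbb{R}^n$, so that dimensions add correctly and $b_{d_k}$ refers to $d_k$-faces in $\mathbb{R}^{|\pi_k|}$; you gesture at this but it is the place where ambient dimensions can silently go wrong.
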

	\begin{proof}
By Zaslavsky’s formula in \eqref{Zas} and the definition of faces, we immediately deduce that
\[
b_d\big((\A_n)_X\big) = \sum_{Y \in L((\mathcal{A}_n)_X),\, \dim(Y)=d} \left|\chi\big((\mathcal{A}_n)_X/Y, 1\big)\right|.
\]
Comparing this with the coefficients of the Whitney polynomial in \eqref{Whitney2}, we observe that $b_d\big((\A_n)_X\big)$ is precisely the absolute value of the coefficient of $x^{n-d}$ in $w\big((\mathcal{A}_n)_X, x, 1\big)$. Namely,
\begin{equation}\label{eq2}
b_d\big((\A_n)_X\big)=\left| [x^{n-d}]w\big((\mathcal{A}_n)_X, x, 1\big) \right|.
\end{equation}
		Note from \cite[Lemma 7.2]{SSZ2025} that $
		L\big((\mathcal{A}_n)_X\big) \cong L(\mathcal{A}_{|\pi_1|}) \times \cdots \times L(\mathcal{A}_{|\pi_l|}).$ Therefore, 
		\[
		w\big((\mathcal{A}_n)_X, x, t\big) = w(\mathcal{A}_{|\pi_1|}, x, t) \cdots w(\mathcal{A}_{|\pi_l|}, x, t).
		\]
		Substituting this into \eqref{eq2}, we get
		\[
		\begin{aligned}
			b_d\big((\A_n)_X\big) &= \left| [x^{n-d}]w\big((\mathcal{A}_n)_X, x, 1\big) \right| \\
			&= \sum_{\substack{d_1+\cdots+d_l=d, \\ d_1,\ldots,d_l\ge 1}} \prod_{i=1}^l \left| [x^{|\pi_i| - d_i}]w(\mathcal{A}_{|\pi_i|}, x, 1) \right| \\
			&= \sum_{\substack{d_1+\cdots+d_l=d, \\ d_1,\ldots,d_l\ge 1}} \prod_{i=1}^l b_{d_i}(\mathcal{A}_{|\pi_i|}).
		\end{aligned}
		\]
This completes the proof.
	\end{proof}
With the preceding lemmas established, we are now ready to prove Theorem \ref{thm1}.
\begin{proof}[Proof of Theorem \ref{thm1}]
Note that for any $X\in L(\underline{\A_n})$, the intersection poset of  $\underline{\A_n}/X$ is isomorphic to that of the braid arrangement in $\mathbb{R}^{\dim(X)}$. It follows from Zaslavsky's formula in \eqref{Zas} that $r(\underline{\A_n}/X) = \dim(X)!$. Let $\Pi_{n,l}$ denote the set of all partitions of $[n]$ with $l$ blocks. From \eqref{eq3}, Lemma \ref{Zas-level} and Lemma \ref{bounded}, we have
		\begin{align*}
			f_{d,l}(\A_n)&=\sum_{X\in L(\underline{\A_n}),\,\dim(X)=l}r(\underline{\A_n}/X)b_{d}\big((\A_n)_X\big)\\
			&=\sum_{\{\pi_1,\ldots,\pi_l\}\in\Pi_{n,l}}l! \sum_{\substack{d_1+\cdots+d_l=d,\\d_1,\ldots,d_l\ge 1}}\prod_{i=1}^{l}b_{d_i}(\A_{|\pi_i|})\\
			&=\sum_{\substack{n_1+\cdots+n_l=n,\\n_1,\ldots,n_l\ge1}}\binom{n}{n_1,\ldots,n_l}\sum_{\substack{d_1+\cdots+d_l=d,\\d_1,\ldots,d_l\ge 1}}\prod_{i=1}^{l}f_{d_i,1}(\A_{n_i})\\
			&=\sum_{\substack{n_1+\cdots+n_l=n,\\d_1+\cdots+d_l=d,\\n_{i}\ge d_{i}\ge1}}\binom{n}{n_1,\ldots,n_l}\prod_{i=1}^{l}f_{d_i,1}(\A_{n_i}).
		\end{align*}
	Substituting this into the expression of $F_{l}(x,y)$,  we obtain
	\begin{align*}
		F_{l}(x,y)&=\sum_{n\ge d\ge l}f_{d,l}(\A_n)x^{n-d}\frac{y^n}{n!}\\
		&=\sum_{n\ge d\ge l}\sum_{\substack{n_1+\cdots+n_l=n,\\d_1+\cdots+d_l=d,\\n_{i}\ge d_{i}\ge1}}\binom{n}{n_1,\ldots,n_l}\Big(\prod_{i=1}^{l}f_{d_i,1}(\A_{n_i})\Big)x^{n-d}\frac{y^n}{n!}\\
		&=\prod_{i=1}^{l}\Big(\sum_{n_{i}\ge d_{i}\ge 1}f_{d_i,1}(\A_{n_i})x^{n_i-d_i}\frac{y^{n_i}}{n_i!}\Big)\\
		&=\big(F_{1}(x,y)\big)^l.
	\end{align*}
This completes the proof.
	\end{proof}
	
\subsection{Proof of Theorem \ref{stirling}}
The \emph{Stirling number of the second kind} $S(n,l)$ is the number of partitions of the set $[n]$ into $l$ blocks, i.e., 
$S(n,l)=\frac{1}{l!}\sum\binom{n}{n_1,\ldots,n_l}$, where the sumation is over all integers $n_1,\ldots,n_l\ge 1$ with $n_{1}+\cdots+n_{l}=n$.  For any ESA $(\A_1,\A_2,\ldots)$,  the following combinatorial identity shows that an alternating sum of the numbers of level-$l$ faces in $\mathcal{A}_n$ is  independent of the specific choice of ESA, and is exactly equal to $l! S(n,l)$.
\begin{proof}[Proof of Theorem \ref{stirling}]
Note that for any face $F$ of $\mathcal{A}$, $F$ is a region of $\mathcal{A}_n/{\text{aff}(F)}$, where  the affine hull aff$(F)$ of $F$ is the maximal element of $L(\A)$ containing $F$. Applying Zaslavsky's formula \eqref{Zas}, we have
\begin{align*}
			\sum_{d=1}^{n}(-1)^{d-1}f_{d,1}(\A_{n})&=\sum_{d=1}^{n}\sum_{{X\in L(\A_n),\,\dim{X}=d}}(-1)^{d-1}b_d(\A_{n}/{X})\\
			&=\sum_{X\in L(\A_n)}\chi(\A_{n}/X,1).
\end{align*}
Together with the definition of the Whitney polynomial in \eqref{Eq1}, we further arrive at
\[
\sum_{d=1}^{n}(-1)^{d-1}f_{d,1}(\A_{n})=w(\A_{n},1,1).
\]
From its alternative expression in \eqref{Whitney2}, we obtain
		\[
		w(\A_{n},1,1)=\sum_{X\le Z}\mu(X,Z)=\sum_{Z\in L(\A_{n})}\sum_{X\leq Z}\mu(X,Z)=1.
		\]
It follows that $\sum_{d=1}^{n}(-1)^{d-1}f_{d,1}(\A_{n})=1$. Combining Theorem \ref{thm1}, we have
		\begin{align*}
			\sum_{n\ge l}\sum_{d=l}^{n}(-1)^{d}f_{d,l}(\A_{n})\frac{y^n}{n!}&=\Big(\sum_{n \ge 1}\sum_{d=1}^{n}(-1)^{d}f_{d,1}(\A_{n})\frac{y^n}{n!}\Big)^{l}\\
			&=\Big(\sum_{n\ge 1}-\frac{y^n}{n!}\Big)^{l}\\
			&=\sum_{n\ge l}\sum_{\substack{n_{1}+\cdots+n_{l}=n,\\n_1,\ldots,n_l\ge 1}}(-1)^{l}\frac{y^n}{n_{1}!\cdots n_{l}!}.
		\end{align*}
		By comparing the coefficients of $y^n$ on both sides of the above equality, we obtain
		\[
		\sum_{d=l}^{n}(-1)^{d}f_{d,l}(\A_{n})=(-1)^{l}\sum_{\substack{n_{1}+\cdots+n_{l}=n,\\n_1,\ldots,n_l\ge 1}}\binom{n}{n_1,\ldots,n_l}=(-1)^{l} l!S(n,l).
		\]
This completes the proof.
	\end{proof}	
	
\subsection{Proof of Theorem \ref{thm2}}\label{Sec2-2}
In this subsection, we first give a proof of  Theorem \ref{thm2}, and then obtain an analogous result regarding the Whitney polynomial of non-degenerate deformations of the Coxeter arrangement of type $B$. To this end, we introduce a closely related result on the characteristic polynomials of deformed braid arrangements due to Zhang's work. The first main result in \cite{Zhang2025} showed that the characteristic polynomial of a deformed braid arrangement can be expanded in terms of the number of regions with different levels.
\begin{lemma}[\cite{Zhang2025}, Theorem 1.1]\label{Zhang1}
Let $\A_{n}$ be a deformed braid arrangement in $\mathbb{R}^n$. Then
\[
\chi(\A_{n}, t) = \sum_{l=1}^{n} (-1)^{n-l} f_{n,l}(\A_{n}) \binom{t}{l}.
\]
\end{lemma}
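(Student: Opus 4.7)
The plan is first to reformulate the claimed identity in a clean multiplicative form using Zaslavsky-type machinery, and then to establish the reformulated identity by induction on the number of hyperplanes of $\A_n$.

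Applying Zaslavsky's level-counting formula (Lemma \ref{Zas-level}) to $\A_n$, each partition $\pi = \{\pi_1,\ldots,\pi_l\}$ of $[n]$ indexes a flat $X_\pi \in L(\underline{\A_n})$ with $r(\underline{\A_n}/X_\pi) = l!$, while the localization $(\A_n)_{X_\pi}$ decomposes as a product of sub-arrangements $\tilde{\A}_{\pi_j}$ on each block, giving $b_n\big((\A_n)_{X_\pi}\big) = \prod_j b_{|\pi_j|}(\tilde{\A}_{\pi_j})$. Since $\tilde{\A}_{\pi_j}$ has rank $|\pi_j|-1$, Zaslavsky's bounded-region formula \eqref{Zas} gives $b_{|\pi_j|}(\tilde{\A}_{\pi_j}) = (-1)^{|\pi_j|-1}\chi(\tilde{\A}_{\pi_j}, 1)$; combining these with $\binom{t}{l} = t^{(l)}/l!$ and the sign cancellation $(-1)^{n-l}\prod_j (-1)^{|\pi_j|-1} = 1$, the claimed identity is equivalent to the partition-based form
\[
\chi(\A_n, t) \;=\; \sum_{\pi \vdash [n]} t^{(|\pi|)} \prod_{j=1}^{|\pi|} \chi(\tilde{\A}_{\pi_j}, 1),
\]
where $t^{(l)} = t(t-1)\cdots(t-l+1)$ denotes the falling factorial.

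I would then prove this partition identity by induction on $|\A_n|$. The base case $|\A_n| = 0$ is the classical Stirling identity $t^n = \sum_{\pi \vdash [n]} t^{(|\pi|)}$. For the inductive step, pick any $H\colon x_i - x_j = a$ in $\A_n$ and apply the deletion-restriction $\chi(\A_n, t) = \chi(\A_n\setminus H, t) - \chi(\A_n^H, t)$; both $\A_n\setminus H$ and $\A_n^H$ (a deformed braid arrangement on the $(n-1)$-element set obtained by identifying $i$ with $j$ via the offset $a$) have strictly fewer hyperplanes, so the inductive hypothesis applies. Splitting partitions of $[n]$ by whether $i\sim j$: for $i\not\sim j$ the block sub-arrangements are unaffected by removing $H$ and the corresponding contributions agree between $\A_n$ and $\A_n\setminus H$; for $\pi$ with $i,j$ in a common block $\pi_k$, the block-level deletion-restriction $\chi(\tilde{\A}_{\pi_k}, 1) = \chi(\tilde{\A}_{\pi_k}\setminus H, 1) - \chi((\tilde{\A}_{\pi_k})^H, 1)$, combined with the natural bijection between such $\pi$ and partitions of the contracted $(n-1)$-set underlying $\A_n^H$, reassembles the right-hand side exactly.

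The main obstacle is the combinatorial bookkeeping in the inductive step: one must carefully track the bijection between partitions of $[n]$ with $i\sim j$ and partitions of the contracted set defining $\A_n^H$, and verify that the block-level deletion-restriction factorizes correctly into the contributions from $\chi(\A_n\setminus H, t)$ and $\chi(\A_n^H, t)$. Once this combinatorial alignment is established, the Zaslavsky-theoretic reformulation together with the inductive telescoping yields Zhang's formula.
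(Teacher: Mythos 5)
You should first note that this paper contains no proof of Lemma \ref{Zhang1} at all: it is quoted from Zhang [Zhang2025, Theorem 1.1] (the introduction only records that Zhang argued by induction), so there is no in-paper argument to measure yours against. Judged on its own, your proposal is correct, and its first half is essentially the same machinery this paper deploys for Theorem \ref{thm1} in Section \ref{Sec2-1}: Lemma \ref{Zas-level} with $d=n$, flats of the centralization $\underline{\A_n}$ (the braid arrangement) indexed by partitions $\pi$ of $[n]$ with $r(\underline{\A_n}/X)=|\pi|!$, the factorization of the relatively bounded count of the localization over the blocks (the $d=n$ case of Lemma \ref{bounded}, valid for any deformed braid arrangement), and the signed form $b=(-1)^{\operatorname{rank}}\chi(\,\cdot\,,1)$ of Zaslavsky's formula. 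This correctly converts the claim into the partition identity $\chi(\A_n,t)=\sum_{\pi\vdash[n]}t(t-1)\cdots(t-|\pi|+1)\prod_{k}\chi(\A_n^{\pi_k},1)$, where $\A_n^{\pi_k}$ denotes the subarrangement on the block $\pi_k$. Your deletion--restriction induction for that identity also goes through: for $H\colon x_i-x_j=a$, partitions with $i\not\sim j$ reproduce the corresponding terms for $\A_n\setminus\{H\}$, while for $i\sim j$ the block-level relation $\chi(\A_n^{\pi_k},1)=\chi(\A_n^{\pi_k}\setminus\{H\},1)-\chi\big(\A_n^{\pi_k}/H,1\big)$, the block-count-preserving bijection between such $\pi$ and partitions of the contracted index set, and the compatibility of restriction with block subarrangements reassemble exactly $\chi(\A_n\setminus\{H\},t)-\chi(\A_n/H,t)$.

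One point you must make explicit: the induction cannot stay inside the class \eqref{deformed braid}, since deleting $H$ may leave the pair $(i,j)$ with no hyperplane, and your base case is the empty arrangement. So the partition identity has to be stated and proved for arbitrary arrangements of hyperplanes of the form $x_i-x_j=a$ (some or all pairs possibly empty); it does hold in that generality, and both deletion and restriction stay in this larger class. By contrast, the Zaslavsky-level reformulation, which uses that $\underline{\A_n}$ is the full braid arrangement and that each block subarrangement has rank $|\pi_k|-1$ (i.e.\ $m_{ij}\ge 1$), is only valid for, and only needs to be invoked for, the original non-degenerate $\A_n$. With that bookkeeping spelled out, your argument is sound.
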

In fact, the formula in Lemma \ref{Zhang1} can be extended to general restriction $\mathcal{A}_n/X$ of an arbitrary deformed braid arrangement $\mathcal{A}_n$ to an element $X\in L(\A_n)$.
More precisely, Zhang demonstrated in \cite[Lemma 3.1]{Zhang2025} that the restriction $\mathcal{A}_n/H$ of  $\mathcal{A}_n$ to a hyperplane $H\in \mathcal{A}_n$ is still a deformed braid arrangement. Thus, the restriction $\mathcal{A}_n/X$ of  $\mathcal{A}_n$ to an arbitrary element $X\in L(\A_n)$ is also a deformed braid arrangement. Immediately, the subsequent result follows from Lemma \ref{Zhang1}.
\begin{prop}\label{restriction}
Let $\A_{n}$ be a deformed braid arrangement in $\mathbb{R}^n$. Then for any $X \in L(\A_{n})$ with dimension $d$, we have
\[
\chi(\A_{n}/{X}, t) = \sum_{l=1}^{d} (-1)^{d-l} f_{d,l}(\A_{n}/{X}) \binom{t}{l}.
\]
\end{prop}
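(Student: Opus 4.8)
The plan is to deduce Proposition~\ref{restriction} from Lemma~\ref{Zhang1} once it is known that every restriction $\A_n/X$ is itself a deformed braid arrangement; the proposition then becomes nothing more than Lemma~\ref{Zhang1} applied to that arrangement, with $n$ replaced by $d=\dim X$.

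First I would establish, by induction on the codimension $m-\dim X$, the following: \emph{for every deformed braid arrangement $\mathcal{B}$ in $\mathbb{R}^m$ and every $X\in L(\mathcal{B})$, the restriction $\mathcal{B}/X$ is again a deformed braid arrangement} (in $\mathbb{R}^{\dim X}$, after collapsing the coordinates that $X$ forces to be equal). The base case $X=\mathbb{R}^m$ is immediate. If $\dim X<m$, then $X$ is a nonempty intersection of at least one hyperplane of $\mathcal{B}$, so $X\subseteq H$ for some $H\in\mathcal{B}$, and one has the standard ``restriction of a restriction'' identity $\mathcal{B}/X=(\mathcal{B}/H)/X$ (it holds because $X\subseteq H$ gives $H'\cap H\cap X=H'\cap X$ and $X\subseteq H'\cap H\iff X\subseteq H'$ for every $H'\in\mathcal{B}$). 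By Zhang's \cite[Lemma~3.1]{Zhang2025}, $\mathcal{B}/H$ is a deformed braid arrangement in $\mathbb{R}^{m-1}$, and $X\in L(\mathcal{B}/H)$ has codimension $(m-1)-\dim X$, strictly smaller than $m-\dim X$; the induction hypothesis then yields that $(\mathcal{B}/H)/X=\mathcal{B}/X$ is a deformed braid arrangement. In particular $\A_n/X$ is a deformed braid arrangement of dimension $d$.

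It remains to invoke Lemma~\ref{Zhang1} for the $d$-dimensional deformed braid arrangement $\A_n/X$. Since its regions are exactly its $d$-dimensional faces, Lemma~\ref{Zhang1} reads
\[
\chi(\A_n/X,t)=\sum_{l=1}^{d}(-1)^{d-l}f_{d,l}(\A_n/X)\binom{t}{l},
\]
which is precisely the asserted identity. The only step needing (routine) care is the identification $X\cong\mathbb{R}^d$ used above: one must check that under the coordinate-collapsing affine isomorphism neither $\chi(\A_n/X,t)$ nor the face counts $f_{d,l}(\A_n/X)$ change --- the former because the characteristic polynomial depends only on the intersection poset, the latter because the level of a face equals its degree of freedom in the sense of Armstrong--Rhoades and hence is an affine invariant. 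I do not anticipate any real obstacle; all the substantive content sits in Zhang's \cite[Lemma~3.1]{Zhang2025} and Lemma~\ref{Zhang1}, both of which may be assumed.
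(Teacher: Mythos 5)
Your proposal is correct and follows essentially the same route as the paper: deduce that $\A_n/X$ is again a deformed braid arrangement (via Zhang's Lemma~3.1 for restrictions to a single hyperplane, then iterating) and then apply Lemma~\ref{Zhang1} to that $d$-dimensional arrangement. The paper states this in two sentences without writing out the induction or the ``restriction of a restriction'' identity $\A/X=(\A/H)/X$, whereas you spell both out and also flag the (genuine, if routine) point that the level of a face is an affine invariant so the identification $X\cong\mathbb{R}^d$ does not disturb the $f_{d,l}$ counts; these are useful details but not a different proof.
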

	
Applying Proposition \ref{restriction}, we give a simple proof of Theorem \ref{thm2}.
\begin{proof}[Proof of Theorem \ref{thm2}]
Note the fact that 
\[
f_{d,l}(\A_{n})=\sum_{X\in L(\A_n),\,\dim(X)=d}f_{d,l}(\A_{n}/{X}).
\]
It follows from \eqref{Whitney2} and Proposition \ref{restriction} that 
		\begin{align*}
			w(\A_{n},x,t)&=\sum_{X\in L(\A_{n})}x^{n-\dim{X}}\chi(\A_{n}/{X},t)\label{Whitney1} \\
			&=\sum_{d=1}^{n}x^{n-d}\sum_{X\in L(\A_{n}),\,\dim{X}=d}\sum_{l=1}^{d}(-1)^{d-l}f_{d,l}(\A_{n}/{X})\binom{t}{l}\\
			& =\sum_{n \ge d \ge l \ge 1}(-1)^{d-l}f_{d,l}(\A_{n})x^{n-d}\binom{t}{l}.
		\end{align*}
This completes the proof.
\end{proof}
	
Zhang \cite{Zhang2025} further considered a similar expansion for the characteristic polynomials of deformations of  the Coxeter arrangement of type $B$. A \emph{non-degenerate deformation} $\mathcal{B}_n$ of the Coxeter arrangement of  type $B_{n-1}$ in $\mathbb{R}^n$ is given by
\begin{align*}
\mathcal{B}_n&:=\{x_i - x_j= a_{ij}^{(1)},\ldots,a_{ij}^{(m_{ij})}\mid 1\le i<j\le n\}\\
&\cup\{x_i + x_j = b_{ij}^{(1)},\ldots,b_{ij}^{(s_{ij})}\mid1\le i<j\le n\}\\
&\cup \{x_i= c_{i}^{(1)},\,\ldots,\,c_{i}^{(t_i)}\mid 1\le i\le n\},
\end{align*}
where $m_{ij}, s_{ij}, t_i \geq 1$ for all $i,j$ with $1\leq i<j \leq n$.
\begin{lemma}[\cite{Zhang2025}, Theorem 1.4]\label{Zhang2}
Let $\B_{n}$ be a non-degenerate deformation of the Coxeter arrangement of type $B_{n-1}$ in $\mathbb{R}^n$.  Then
\[
\chi(\mathcal{B}_{n}, t) = \sum_{l=0}^{n} (-1)^{n-l} f_{n,l}(\mathcal{B}_{n}) \binom{\tfrac{t-1}{2}}{l}.
\]
\end{lemma}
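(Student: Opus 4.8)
The strategy is to prove Lemma \ref{Zhang2} by following the same template used for Lemma \ref{Zhang1} / Theorem \ref{thm2}, adapted to type $B$: reduce the statement to a fact about all restrictions $\B_n/X$ (which should again be non-degenerate deformations of a type-$B$ Coxeter arrangement, or products thereof), establish the identity on those building blocks, and then assemble. The central new ingredient is the correct substitution: whereas for the braid-type case the natural parameter is $t$ and the basis is $\binom{t}{l}$, here the ambient linear arrangement under centralization $\underline{\B_n}$ is the Coxeter arrangement of type $B_{n-1}$, whose intersection lattice has M\"obius-theoretic "free coordinates" behaving like $\tfrac{t-1}{2}$; this is exactly why the binomial basis $\binom{(t-1)/2}{l}$ appears. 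I would begin by recording that $\underline{\B_n}$ equals the type-$B_{n-1}$ Coxeter arrangement $\{x_i\pm x_j=0,\ x_i=0\}$, and that each $X\in L(\underline{\B_n})$ corresponds to a signed-set partition (in the sense of the type-$B$ partition lattice), with $r(\underline{\B_n}/X)$ computable via Zaslavsky's formula applied to a type-$B$ Coxeter arrangement of the appropriate rank.

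The key steps, in order, are as follows. First I would prove the analogue of Lemma \ref{bounded} and Proposition \ref{restriction} for type $B$: for $X\in L(\underline{\B_n})$ the localization $(\B_n)_X$ factors, up to intersection-lattice isomorphism, as a product of deformed braid arrangements (coming from the "paired" blocks of the signed partition) and one non-degenerate type-$B$ deformation of smaller rank (coming from the "zero block" and $\pm$-antisymmetric part); consequently $b_d$ of it is a convolution of the corresponding $b_{d_i}$'s. Second, I would establish the identity $\chi(\B_n/X,t)=\sum_{l}(-1)^{\dim X-l}f_{\dim X,l}(\B_n/X)\binom{(t-1)/2}{l}$ for every $X\in L(\B_n)$, using Zhang's result that $\B_n/H$ is again a non-degenerate type-$B$ deformation (or splits into a type-$A$ piece, handled by Proposition \ref{restriction}), together with induction on rank. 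Third — this is really the heart — I would run the level-counting convolution of Lemma \ref{Zas-level}: write $f_{n,l}(\B_n)=\sum_{\dim X=l}r(\underline{\B_n}/X)\,b_n((\B_n)_X)$, plug in $r(\underline{\B_n}/X)=2^l l!$ when $X$ is an $l$-dimensional element whose stabilizer is the full type-$B_l$ Coxeter group (and the appropriate smaller value otherwise), and match this against the coefficients obtained by expanding $\chi(\B_n,t)$ in the basis $\binom{(t-1)/2}{l}$ via \eqref{Whitney2} specialized at $x=0$. The arithmetic identity one needs is that $\binom{(t-1)/2}{l}$ evaluated at $t=-1$ gives the sign bookkeeping $(-1)^l$ matching $r(\underline{\B_n}/X)=|\chi(\underline{\B_n}/X,-1)|$, and at $t=1$ gives the Kronecker delta $[l=0]$ matching relative boundedness; these two evaluations pin down the expansion coefficients uniquely once one knows the degree and leading behavior.

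An alternative, cleaner route — and the one I would actually write up if the convolution bookkeeping gets unwieldy — is to first prove the Whitney-polynomial version $w(\B_n,x,t)=\sum_{n\ge d\ge l\ge 0}(-1)^{d-l}f_{d,l}(\B_n)x^{n-d}\binom{(t-1)/2}{l}$ (this is the object "Theorem \ref{typeB}" alluded to in the introduction), exactly as in the proof of Theorem \ref{thm2} but with Proposition \ref{restriction} replaced by its type-$B$ analogue, using $f_{d,l}(\B_n)=\sum_{\dim X=d}f_{d,l}(\B_n/X)$ and \eqref{Whitney2}; then Lemma \ref{Zhang2} is simply the $x=0$ specialization, since $[x^0]w(\B_n,x,t)=\chi(\B_n,t)$ and only the $d=n$ terms survive. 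In this formulation the real content is isolated in the type-$B$ version of Proposition \ref{restriction}, whose proof is an induction on $\dim X$ using Zhang's restriction lemma \cite[Lemma 3.1]{Zhang2025} and its type-$B$ counterpart, with the base case $\dim X=0$ (a point arrangement, $\chi=1$, $\binom{(t-1)/2}{0}=1$) and $\dim X = 1$ (a one-dimensional arrangement, directly verified) as anchors.

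\textbf{Main obstacle.}
The hard part will be pinning down the type-$B$ analogue of the product decomposition $L((\A_n)_X)\cong L(\A_{|\pi_1|})\times\cdots\times L(\A_{|\pi_l|})$ used in Lemma \ref{bounded}: in type $B$ the signed-partition lattice has an asymmetric structure — one distinguished "zero block" closed under negation versus ordinary two-element-orbit blocks — so the localization $(\B_n)_X$ decomposes into a product of genuine deformed braid arrangements and exactly one non-degenerate type-$B$ deformation of reduced rank, and getting the bookkeeping of dimensions, the factor $2^{(\text{number of paired blocks})}$ in $r(\underline{\B_n}/X)$, and the shift in the binomial parameter all to line up requires care. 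Once that decomposition is in hand, the rest is a routine mirroring of the type-$A$ arguments already presented in Section \ref{Sec2}.
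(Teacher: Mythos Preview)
The paper does not prove Lemma \ref{Zhang2}; it is quoted verbatim as Theorem 1.4 of \cite{Zhang2025} and used as a black box. The paper's own logical flow runs \emph{from} Lemma \ref{Zhang2} \emph{to} Proposition \ref{restriction2} (via Zhang's restriction lemma \cite[Lemma 3.2]{Zhang2025}) and then \emph{to} Theorem \ref{typeB}. So there is nothing in the paper to compare your argument against, and your ``alternative, cleaner route''---deduce Lemma \ref{Zhang2} as the $x=0$ specialization of Theorem \ref{typeB}, which in turn is proved from the type-$B$ analogue of Proposition \ref{restriction}---is circular relative to the paper: Theorem \ref{typeB} is downstream of Lemma \ref{Zhang2}, not upstream. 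Your proposed induction on $\dim X$ for the type-$B$ restriction proposition does not break the cycle, because the top case $\dim X=n$ (i.e.\ $X=\mathbb{R}^n$, $\B_n/X=\B_n$) is precisely the statement you are trying to establish.

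Your first route, via Lemma \ref{Zas-level} and the signed-partition decomposition of $L(\underline{\B_n})$, is a plausible independent strategy, but the sketch has a genuine gap. You assert that the evaluations at $t=1$ and $t=-1$ ``pin down the expansion coefficients uniquely once one knows the degree and leading behaviour''; this is false. The polynomials $\binom{(t-1)/2}{l}$ for $0\le l\le n$ form a basis of the $(n{+}1)$-dimensional space of polynomials of degree $\le n$, and two evaluations plus a leading term constrain only three linear functionals. What Zhang's proof in \cite{Zhang2025} actually does (and what any direct proof must do) is identify the full transition between the monomial basis and the basis $\binom{(t-1)/2}{l}$, which requires the finite-field / signed-partition combinatorics specific to type $B$---in particular a type-$B$ analogue of the identity $r(\underline{\A_n}/X)=(\dim X)!$ that carries the factor-of-$2$ bookkeeping you allude to. Your plan names the right ingredients (signed set partitions, one distinguished zero block, the product decomposition of $(\B_n)_X$) but does not supply the identity that converts the convolution sum into the binomial expansion; that identity is the content of Zhang's theorem, not a consequence of two special values.
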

Notably, the non-degenerate deformations of the Coxeter arrangement of type $B$ share the analogous property with the deformed braid arrangements. Namely, the restriction $\mathcal{B}_n/X$ of a non-degenerate deformation $\B_n$ of an arbitrary Coxeter arrangement of type $B_{n-1}$ to an element $X\in L(\B_n)$ is a non-degenerate deformation of the Coxeter arrangement of type $B$ as well (see \cite[Lemma 3.2]{Zhang2025}). The property guarantees the formula in Lemma \ref{Zhang2} can be generalized to such restrictions $\mathcal{B}_n/X$ of $\B_n$.
\begin{prop}\label{restriction2}
Let $\B_{n}$ be a non-degenerate deformation of the Coxeter arrangement of type $B_{n-1}$ in $\mathbb{R}^n$. Then for any $X \in L(\B_{n})$ with dimension $d$, we have
\[
\chi(\mathcal{B}_{n}/X, t) = \sum_{l=0}^{d} (-1)^{d-l} f_{d,l}(\mathcal{B}_{n}/X) \binom{\tfrac{t-1}{2}}{l}.
\]
\end{prop}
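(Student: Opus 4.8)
The plan is to mimic exactly the derivation of Proposition \ref{restriction} from Lemma \ref{Zhang1}, but now starting from Lemma \ref{Zhang2} instead. The key structural input is already quoted in the excerpt: by \cite[Lemma 3.2]{Zhang2025}, the restriction $\mathcal{B}_n/H$ of a non-degenerate deformation $\mathcal{B}_n$ of the Coxeter arrangement of type $B_{n-1}$ to any hyperplane $H\in\mathcal{B}_n$ is again a non-degenerate deformation of a Coxeter arrangement of type $B$ (in one lower dimension). Iterating this observation along a chain of hyperplanes whose intersection is $X$, we conclude that for any $X\in L(\mathcal{B}_n)$ with $\dim X=d$, the restriction $\mathcal{B}_n/X$ is itself a non-degenerate deformation of the Coxeter arrangement of type $B_{d-1}$ in (an affine space isomorphic to) $\mathbb{R}^d$.

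First I would state this reduction explicitly: since $X\in L(\mathcal{B}_n)$, there is a chain $\mathbb{R}^n\supsetneq H_1\supsetneq H_1\cap H_2\supsetneq\cdots\supsetneq X$ with each successive space obtained by intersecting with a hyperplane of the corresponding restricted arrangement, and each one-step restriction preserves the class of non-degenerate type-$B$ deformations by \cite[Lemma 3.2]{Zhang2025}; hence $\mathcal{B}_n/X$ lies in this class. Second, I would simply apply Lemma \ref{Zhang2} to the arrangement $\mathcal{B}_n/X$, which lives in a $d$-dimensional space, so that "$n$" in the lemma is replaced by "$d$". This yields
\[
\chi(\mathcal{B}_n/X,t)=\sum_{l=0}^{d}(-1)^{d-l}f_{d,l}(\mathcal{B}_n/X)\binom{\tfrac{t-1}{2}}{l},
\]
which is precisely the claimed identity, since $f_{d,l}(\mathcal{B}_n/X)=f_{\dim(\mathcal{B}_n/X),l}(\mathcal{B}_n/X)$ counts the level-$l$ regions of $\mathcal{B}_n/X$ as required in Lemma \ref{Zhang2}.

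There is really no computational obstacle here; the proof is a two-line corollary once the closure property is in hand. The only point that deserves a word of care is the base of the induction and the affine-versus-linear bookkeeping: one should note that although $\mathcal{B}_n/X$ sits inside the affine subspace $X\cong\mathbb{R}^d$ rather than $\mathbb{R}^d$ itself, the notions of characteristic polynomial, faces, and level are all intrinsic to the ambient affine space of the arrangement, so Lemma \ref{Zhang2} applies verbatim with $n$ replaced by $d$. I would phrase the proof as: "By \cite[Lemma 3.2]{Zhang2025} and induction on the length of a maximal chain from $\mathbb{R}^n$ down to $X$ in $L(\mathcal{B}_n)$, the restriction $\mathcal{B}_n/X$ is a non-degenerate deformation of the Coxeter arrangement of type $B_{d-1}$. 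Applying Lemma \ref{Zhang2} to $\mathcal{B}_n/X$ gives the desired formula." This mirrors, word for word, the remark immediately preceding Proposition \ref{restriction} that deduces the braid-type case, so the two propositions can even be proved in parallel.
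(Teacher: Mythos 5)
Your proof is correct and follows essentially the same route as the paper: the paper likewise invokes Zhang's Lemma 3.2 to conclude that $\mathcal{B}_n/X$ remains a non-degenerate deformation of a type-$B$ Coxeter arrangement (the iteration along a chain of hyperplanes being implicit, exactly as in the braid case for Proposition~\ref{restriction}), and then applies Lemma~\ref{Zhang2} to that restricted arrangement with $n$ replaced by $d$.
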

 
Based on Zhang's work, we further extend the formula to the Whitney polynomials of non-degenerate deformations of the Coxeter arrangement of type $B$. Using Proposition \ref{restriction2}, Theorem \ref{typeB} can be proved analogously to Theorem \ref{thm2}, and its proof is therefore omitted.
\begin{theorem}\label{typeB}
Let $\B_{n}$ be a non-degenerate deformation of the Coxeter arrangement of type $B_{n-1}$ in $\mathbb{R}^n$. Then 
\[
w(\mathcal{B}_n,x,t)=\sum_{n \ge d \ge l \ge 0}(-1)^{d-l}f_{d,l}(\mathcal{B}_n)x^{n-d}\binom{\frac{t-1}{2}}{l}.
\]
\end{theorem}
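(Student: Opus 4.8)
The plan is to mimic exactly the proof of Theorem~\ref{thm2}, replacing Proposition~\ref{restriction} by Proposition~\ref{restriction2}. The starting point is the expansion~\eqref{Whitney2} of the Whitney polynomial in terms of characteristic polynomials of restrictions, which holds for \emph{any} hyperplane arrangement and in particular for $\mathcal{B}_n$: namely $w(\mathcal{B}_n,x,t)=\sum_{X\in L(\mathcal{B}_n)}x^{n-\dim X}\chi(\mathcal{B}_n/X,t)$. I would then substitute, for each $X\in L(\mathcal{B}_n)$ of dimension $d$, the formula from Proposition~\ref{restriction2}, $\chi(\mathcal{B}_n/X,t)=\sum_{l=0}^{d}(-1)^{d-l}f_{d,l}(\mathcal{B}_n/X)\binom{(t-1)/2}{l}$. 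This is legitimate precisely because the restriction of a non-degenerate deformation of type $B_{n-1}$ to any $X\in L(\mathcal{B}_n)$ is again a non-degenerate deformation of a Coxeter arrangement of type $B$ (cited from \cite[Lemma 3.2]{Zhang2025}), so Proposition~\ref{restriction2} applies to each $\mathcal{B}_n/X$.

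The remaining work is purely organizational: group the sum over $X\in L(\mathcal{B}_n)$ by the dimension $d=\dim X$, interchange the order of the finite summations over $d$ and $l$, and use the additivity of face counts over the affine hulls,
\[
f_{d,l}(\mathcal{B}_n)=\sum_{X\in L(\mathcal{B}_n),\,\dim X=d}f_{d,l}(\mathcal{B}_n/X),
\]
which follows from the observation (already used in the proof of Theorem~\ref{thm2}) that every face of $\mathcal{B}_n$ of dimension $d$ is a region of $\mathcal{B}_n/X$ for its affine hull $X=\mathrm{aff}(F)$, and conversely. Collecting terms yields
\[
w(\mathcal{B}_n,x,t)=\sum_{n\ge d\ge l\ge 0}(-1)^{d-l}f_{d,l}(\mathcal{B}_n)x^{n-d}\binom{\tfrac{t-1}{2}}{l},
\]
as desired. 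I would note that the lower index $l\ge 0$ (rather than $l\ge 1$ as in Theorem~\ref{thm2}) is the only structural difference, reflecting the fact that $W(\mathcal{B}_n)=\mathbb{R}^n$ so that $\mathcal{B}_n$ can have relatively bounded, i.e.\ level-$0$, faces (bounded faces).

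I do not anticipate a genuine obstacle; the proof is a formal consequence of Proposition~\ref{restriction2} and~\eqref{Whitney2}, exactly parallel to Theorem~\ref{thm2}, which is why the authors state that it is omitted. The only point demanding a line of care is justifying that $\binom{(t-1)/2}{l}$ for $l=0,1,\dots,n$ is a basis of the space of polynomials of degree $\le n$ in $t$ (equivalently, that the expansion coefficients are well-defined and the two sides, being polynomials of bounded degree agreeing via the term-by-term substitution, are literally equal as polynomials) — but this is immediate since $\binom{(t-1)/2}{l}$ has degree exactly $l$ in $t$. Everything else is a bookkeeping rearrangement of finite sums.
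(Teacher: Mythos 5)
Your proposal is correct and is precisely the argument the paper has in mind: the authors explicitly state that Theorem~\ref{typeB} follows from Proposition~\ref{restriction2} by the same computation as Theorem~\ref{thm2}, and you reproduce that computation step for step (expansion~\eqref{Whitney2}, substitution of Proposition~\ref{restriction2}, and the additivity $f_{d,l}(\mathcal{B}_n)=\sum_{\dim X=d}f_{d,l}(\mathcal{B}_n/X)$). Your side remark explaining the shift from $l\ge 1$ to $l\ge 0$ (because $W(\mathcal{B}_n)=\mathbb{R}^n$, so relatively bounded means bounded, i.e.\ level $0$) is a correct and useful clarification that the paper leaves implicit.
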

\subsection{Proof of Theorem \ref{thm3}}\label{Sec2-3}
Next we apply Theorems \ref{thm1} and \ref{thm2} to prove Theorem \ref{thm3}.
\begin{proof}[Proof of Theorem \ref{thm3}]
By Theorem \ref{thm1} and Theorem \ref{thm2}, we have
		\begin{align}\label{eq1}
			&1+ \sum_{n\ge 1}w(\A_{n},x,t)\frac{y^n}{n!}=1+\sum_{ n \geq d \geq l \geq 1}(-1)^{d-l}f_{d,l}(\A_{n})x^{n-d}\frac{y^n}{n!}\binom{t}{l}\notag\\
			&=1+\sum_{ n \geq d \geq l \geq 1}(-1)^{d-l}x^{n-d}\frac{y^n}{n!}\binom{t}{l}\sum_{\substack{n_{1}+\cdots+n_{l}=n,\\d_{1}+\cdots+d_{l}=d,\\n_{i}\ge d_{i}\ge 1}}\binom{n}{n_{1}\ldots n_{l}}\prod_{i=1}^{l}f_{d_{i},1}(\A_{n_i})\notag\\
			&=1+\sum_{n\ge d\ge l\ge 1}\binom{t}{l}\sum_{\substack{n_{1}+\cdots+n_{l}=n,\\d_{1}+\cdots+d_{l}=d,\\n_{i}\ge d_{i}\ge 1}}\prod_{i=1}^{l}\Big((-1)^{d_{i}-1}f_{d_{i},1}(\A_{n_i})x^{n_{i}-d_{i}}\frac{y^{n_i}}{n_{i}!}\Big)\notag\\
			&=\big(1-F_{1}(-x,-y)\big)^t.
		\end{align}
		On the other hand, applying Theorem \ref{thm1} again, we  deduce 
		\[
		\big(1-F_{1}(-x,-y)\big)^{-1}=1+\sum_{l\ge 1}\big(F_{1}(-x,-y)\big)^{l}=1+\sum_{l\ge 1}F_{l}(-x,-y).
		\] 
		Substituting this into \eqref{eq1}, we immediately obtain
		\[
		1+ \sum_{n\ge 1}w(\A_{n},x,t)\frac{y^n}{n!}=\big(1-F_{1}(-x,-y)\big)^t=\big(1+F(-x,-y)\big)^{-t}.
		\]
This completes the proof.
\end{proof}
	
\section{A bijection based on Hetyei’s labeling
}\label{Sec4}
Given an ESA $(\A_1,\A_2,\ldots)$, in this section we concentrate on constructing a bijection from faces of $\A_n$ with level $l$ to $(l+1)$-tuples $(\pi, F_1,\ldots, F_l)$, where $\pi=(\pi_1,\ldots,\pi_l)$ is an ordered $l$-partition of $[n]$, and $F_p$ is a relatively bounded face of $\A_{|\pi_p|}$ for $p=1,\ldots,l$. 

Let us review some notations on digraphs. Let $G=\big(V(G),E(G)\big)$ be a finite digraph with the vertex set $V(G)$ and the directed edge set $E(G)$. In the digraph $G$, two vertices $v$ and $u$ are said to be {\em strongly connected} if there exists a directed $(v,u)$-path from $v$ to $u$ and a directed $(u,v)$-path from $u$ to $v$. Alternatively,  two vertices $v$ and $u$ are strongly connected in $G$ if and only if there exists a directed cycle of $G$ containing $v$ and $u$. The digraph $G$ is called {\em strongly connected} if any two vertices $v$ and $u$ are strongly connected. Every maximal strongly connected subdigraph of $G$ is known as a {\em strong component} of $G$.

Recall from \eqref{deformed braid} that a deformed braid arrangement $\mathcal{A}_n$ in $\mathbb{R}^n$ consists of the following hyperplanes: 
\[
\A_n \colon x_{i}-x_{j}=a_{ij}^{(1)},\ldots,a_{ij}^{(m_{ij})}, \;\;1\le i<j\le n,
\]
where $m_{ij}\ge 1$ and $a_{ij}^{(1)} < \cdots < a_{ij}^{(m_{ij})}$ for any pair $1\le i < j\le n$. Most recently, Hetyei \cite{Hetyei2024} constructed a weighted digraph associated with each region of a deformed braid arrangement. Using the graph structure, Hetyei showed that all regions of such an arrangement can be bijectively labeled by a set of $m$-acyclic weighted digraphs, which contain only directed cycles with negative weights. This is a key idea for establishing our bijection.

Inspired by Hetyei's labeling approach, for any face $F$ of $\mathcal{\A}_n$, we construct a digraph on the vertex set $[n]$ as follows: take a point $\bm x=(x_{1},\ldots,x_{n})\in F$; for each $i<j$, if $x_{i}-x_{j}\geq a_{ij}^{(1)}$, we create a directed edge $i\rightarrow j$; if $x_{i}-x_{j} \leq a_{ij}^{(m_{ij})}$, we create a directed edge $i\leftarrow j$. Note that for each hyperplane $H\in\A_n$, all points of $F$ either lie on the hyperplane $H$ or lie in the same side of $H$ uniformly. Thus, this digraph is independent of the choice of representatives $\bm x\in F$ and denoted by $G(F)$. We shall use this digraph structure as a bridge to construct our desired bijection.

Let $\mathcal{A}_n$ be a deformed braid arrangement in $\mathbb{R}^n$ and $F$ is a face with $(x_1, \ldots, x_n) \in F$. Suppose $G_1(F),\ldots,G_l(F)$ are all strong components of $G(F)$. Note the following basic fact: by the construction of $G(F)$, vertices $i$ and $j$ from distinct strong components have exactly one directed edge between them (either $i\to j$ or $j\to i$). Now, consider two strong components $G_p(F)$ and $G_q(F)$. Suppose there exist  $i\in V\big(G_p(F)\big)$ and $j\in V\big(G_q(F)\big)$ with the directed edge $i\to j$. We can directly conclude that for any $u\in V\big(G_p(F)\big)$ and  $v\in V\big(G_q(F)\big)$, the unique directed edge between $u$ and $v$ is from $u$ to $v$. Otherwise, $V\big(G_p(F)\big)\sqcup V\big(G_q(F)\big)$ forms a strongly connected subdigraph of $G(F)$, which contradicts the assumption that $G_p(F)$ and $G_q(F)$ are  distinct strong components of $G(F)$. We now consider the simple digraph $\bar{G}(F)$ on strong components of $G(F)$, where each strong component is viewed as a vertex of  $\bar{G}(F)$ and a directed edge is given by $G_p(F) \rightarrow G_q(F)$ if there exists $i\in V(G_p(F)), j\in V(G_q(F))$ such that there is a directed edge from $i$ to $j$. Notice from \cite[exercise 1.4.14]{West} that for an arbitrary $n$-vertex digraph with no directed cycle, there exists a linear ordering $v_1,v_2,\ldots,v_n$ such that if $v_i\to v_j$, then $i<j$. Thus, the digraph $\bar{G}(F)$ leads to a linear ordering of the strong components of $G(F)$ in such a way that 
\begin{itemize}
	\item[{\rm $(\star)$}] for any $i$ in a preceding strong component and for any $j$ in a succeeding strong component there is a directed edge $i \to j$ but no directed edge $j\to i$.
\end{itemize}

Furthermore, suppose $\big(G_1(F),\ldots,G_l(F)\big)$ is the $l$-tuple of strong components of $G(F)$ induced by the linear ordering defined in $(\star)$. In this context, for any $i \in V\big(G_p(F)\big)$, $j \in V\big(G_q(F)\big)$ with $p < q$, the next relation directly follows from the construction of $G(F)$:
\begin{equation}\label{Inequ}
	\begin{cases}
		x_{i}-x_{j}>a_{ij}^{(m_{ij})}, & \text{if}\;\; i<j; \\ x_{j}-x_{i}<a_{ij}^{(1)}, & \text{if}\;\; i>j.
	\end{cases}
\end{equation}

Before proceeding further, the following lemmas are required.
\begin{lemma}\label{same-component}
	Let $\mathcal{A}_n$ be a deformed braid arrangement in $\mathbb{R}^n$ and $F$ a face of $\A_n$. For any $(x_1, \ldots, x_n) \in F$, if vertices $i,j$ of $G(F)$ are in the same strong component, then
	\[
	|x_{i}-x_{j}|\le (n-1)a,
	\]
	where $a=\max\big\{|a_{pq}^{(k)}|\mid 1\le p<q\le n, 1\le k\le m_{pq}\big\}$.
\end{lemma}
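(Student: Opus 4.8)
The plan is to distill the edge-creation rule into one uniform inequality: \emph{every directed edge $u\to v$ of $G(F)$ forces $x_v-x_u\le a$ for every $\bm x=(x_1,\dots,x_n)\in F$.} First I would verify this by splitting into the two cases of the convention in \eqref{deformed braid}. If $u<v$, the edge $u\to v$ was created because $x_u-x_v\ge a_{uv}^{(1)}$; since $a_{uv}^{(1)}\ge -a$, this gives $x_v-x_u\le a$. If $u>v$, then $u\to v$ is the edge written ``$v\leftarrow u$'' for the pair $v<u$, created because $x_v-x_u\le a_{vu}^{(m_{vu})}\le a$, so again $x_v-x_u\le a$. (In the degenerate case where $x_u-x_v$ equals one of the values $a_{uv}^{(k)}$, both edges $u\to v$ and $v\to u$ are present; this is harmless, since the displayed inequality still holds for each of them.)

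Next I would chain these inequalities along directed paths. If $i$ and $j$ lie in the same strong component of $G(F)$, then $G(F)$ contains a directed $(i,j)$-walk, hence a directed $(i,j)$-path $i=v_0\to v_1\to\cdots\to v_k=j$ with pairwise distinct vertices in $[n]$, so that $k\le n-1$. Telescoping the uniform inequality along this path,
\[
x_j-x_i=\sum_{t=1}^{k}\big(x_{v_t}-x_{v_{t-1}}\big)\le ka\le (n-1)a.
\]
Running the same argument on a directed $(j,i)$-path gives $x_i-x_j\le (n-1)a$, and combining the two bounds yields $|x_i-x_j|\le (n-1)a$, as claimed.

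Since the argument uses only the defining inequalities of the hyperplanes together with the elementary fact that a simple path on the vertex set $[n]$ has at most $n-1$ edges, there is no genuine obstacle here. The only points requiring a little care are the bookkeeping of the two cases $u<v$ and $u>v$ in the edge convention, and the standard reduction from a directed walk (supplied by strong connectivity) to a simple directed path, which is what makes the bound $k\le n-1$ legitimate.
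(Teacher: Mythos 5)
Your proof is correct and is essentially the same argument the paper gives: derive a uniform per-edge bound $x_v-x_u\le a$ (equivalently $x_u-x_v\ge -a$) from the hyperplane data in each of the two subscript cases, telescope along a directed path of length at most $n-1$, and apply the same bound on a return path to get both one-sided inequalities. The only cosmetic difference is the sign convention (you bound $x_v-x_u$ from above while the paper bounds the reversed difference from below), and your explicit note on the walk-to-path reduction and the degenerate both-edges case is a slight extra precaution already implicit in the paper's argument.
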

\begin{proof}
	Notice the simple fact that for an arbitrary directed edge $k\to s$ of $G(F)$, from the construction of $G(F)$, we have either $x_s - x_k \ge a_{sk}^{(1)}$ or $ x_s- x_k \ge -a_{sk}^{(m_{sk})}$.
	Since vertices $i,j$ of $G(F)$ are in the same strong component, there exists a directed path $P$ from $i$ to $j$. Suppose $P: i=i_0\to i_1\to\cdots\to i_l=j$. Then $l\leq n-1$, and 
	\[
	x_i - x_j \;=\; \sum_{c=1}^l (x_{i_{k-1}}-x_{i_k})\ge -la\ge -(n-1)a.
	\]
	Likewise, there exists a directed path from $j$ to $i$. Using the same argument in the previous case, we also obtain $x_j - x_i\ge -(n-1)a$. It follows that $|x_i-x_j|\le (n-1)a$. 
\end{proof}

Most recently, Chen et al.  showed in \cite[Theorem 2.7]{CFWY2025} that the level of any region of  a deformed braid arrangement is precisely the number of strong components of the associated $m$-acyclic weighted digraph. In fact, this property can be extended to the more general faces of deformed braid arrangements.
\begin{prop}\label{component-number}
	Let $\mathcal{A}_n$ be a deformed braid arrangement in $\mathbb{R}^n$ and $F$ a face of $\A_n$. Then the number of strong components of $G(F)$ equals the level $l(F)$ of $F$.
\end{prop}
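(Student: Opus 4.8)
The plan is to show the two inequalities $l(F) \le \#\{\text{strong components of }G(F)\}$ and $l(F) \ge \#\{\text{strong components of }G(F)\}$ separately. Write $G_1(F),\ldots,G_l(F)$ for the strong components of $G(F)$, ordered as in $(\star)$, and set $V_p = V(G_p(F))$, so that $\{V_1,\ldots,V_l\}$ is a partition of $[n]$. For the upper bound $l(F)\le l$, the natural candidate subspace is
\[
W := \big\{(x_1,\ldots,x_n)\in\mathbb{R}^n \colon x_i = x_j \text{ whenever } i,j \text{ lie in the same block } V_p\big\},
\]
which has dimension exactly $l$. I would show $F \subseteq B(W,r)$ for a suitable $r$: given $\bm x = (x_1,\ldots,x_n)\in F$, its distance to $W$ is controlled by the spreads $\max_{i,j\in V_p}|x_i-x_j|$ within each block, and Lemma \ref{same-component} bounds each such spread by $(n-1)a$ uniformly over $\bm x\in F$. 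Hence $d(\bm x, W)$ is bounded by a constant depending only on $n$ and $a$, giving $l(F)\le l$.

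For the lower bound $l(F)\ge l$, I would argue that $F$ cannot be uniformly close to any subspace $W$ of dimension less than $l$. The idea is that within $F$ the $l$ "cluster coordinates" — one representative value per block, say $y_p := x_{\min V_p}$ — are mutually unbounded and can be made to move independently: inequalities \eqref{Inequ} show that for $p<q$ the differences $x_i - x_j$ across blocks $V_p,V_q$ are forced to lie on one side of the extreme hyperplane constants, so $F$ contains points where the vector $(y_1,\ldots,y_l)$ ranges over an unbounded set in $\mathbb{R}^l$ (e.g. translating all coordinates in blocks $V_q,\ldots,V_l$ by a large positive amount keeps the point in $F$). More precisely, I would exhibit an affine $l$-dimensional family inside $F$ — parametrized by letting each $y_p$ grow — whose projection onto the "difference of cluster averages" directions is a genuine $l$-dimensional unbounded set; any subspace within bounded distance of this family must have dimension at least $l$. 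This is the step that requires care: one must show the translations actually stay inside the face $F$ (not merely its closure), using that crossing a hyperplane would change $G(F)$, and that the resulting unbounded directions are linearly independent modulo bounded perturbations.

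The main obstacle is the lower bound, specifically producing explicit unbounded motions that remain in the \emph{open} face $F$ and pinning down that they force the level to be at least $l$ rather than something smaller. A clean way to handle this is to invoke the relationship, recalled in the introduction, between Ehrenborg's level and the ideal/recession structure of $F$: the recession cone (or, in Zaslavsky's language, the ideal part) of $F$ is, up to the lineality common to all faces, exactly the set of translation directions consistent with \eqref{Inequ}, which is the cone cut out in $W^\perp$-complement coordinates by the strict block-ordering inequalities, and this cone spans an $l$-dimensional space. Identifying $l(F)$ with (one plus) the dimension of this recession structure then closes the argument; alternatively, since Chen et al. \cite[Theorem 2.7]{CFWY2025} already prove the region case, one can reduce to it by passing to $\A_n/\mathrm{aff}(F)$, noting that $F$ is a region of this restricted deformed braid arrangement, that restriction does not alter the relevant strong-component count (the edges of $G(F)$ among coordinates not collapsed by $\mathrm{aff}(F)$ are unchanged), and that $l(F)$ is computed the same way in the smaller ambient space. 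I would present the restriction-reduction as the primary route and keep the direct estimate via Lemma \ref{same-component} as the proof of the easy inequality.
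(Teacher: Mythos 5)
Your upper bound is exactly the paper's: define $W$ by equating coordinates within each strong component, and bound $d(\bm x,W)$ uniformly via Lemma \ref{same-component}. No issue there.

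Your lower bound, however, takes a genuinely different route from the paper. The paper argues directly: it defines the cone $C\subseteq W$ cut out by $y_i\le y_j$ for $i\in V(G_p(F))$, $j\in V(G_q(F))$, $p<q$, then uses \eqref{Inequ} to show $\bm z+C\subseteq F$ for any $\bm z\in F$, hence $C\subseteq B(F,|\bm z|)$, and concludes $l(F)\ge l(C)=\dim C=l$ by the level monotonicity lemma \cite[Lemma 2.5]{CFWY2025}. Your primary route instead reduces to the region case of \cite[Theorem 2.7]{CFWY2025} by passing to $\A_n/\mathrm{aff}(F)$. This reduction is viable, and it has the virtue of explaining the proposition as an immediate corollary of the region case plus structural stability of levels and digraphs under restriction; but as written it leans on two facts that you state without proof, and one of the stated justifications is actually wrong.

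First, you assert that ``$l(F)$ is computed the same way in the smaller ambient space.'' That is true, but not free: given $W'\subseteq \mathrm{aff}(F)$ (a priori affine) with $F\subseteq B(W',r)$, one gets a linear $W\subseteq\mathbb R^n$ of the same dimension at the cost of enlarging $r$; conversely, given a linear $W\subseteq\mathbb R^n$ with $F\subseteq B(W,r)$, its orthogonal projection onto the direction space of $\mathrm{aff}(F)$ works in the lower-dimensional ambient, again with an enlarged constant. That short projection argument should be included.

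Second, and more importantly, your parenthetical reason for ``restriction does not alter the strong-component count'' — that ``the edges of $G(F)$ among coordinates not collapsed by $\mathrm{aff}(F)$ are unchanged'' — does not describe what happens. The digraph attached to $F$ as a region of $\A_n/\mathrm{aff}(F)$ has vertex set the blocks of the partition $P(\mathrm{aff}(F))$, and its edges are governed by the aggregated, translated constants $a_{ij}^{(k)}-c_i+c_j$ over all $i$ in one block and $j$ in the other. The correct statement, which does hold, is that this digraph is precisely the \emph{contraction} of $G(F)$ by those blocks: an edge $B\to B'$ exists iff some $i\in B$, $j\in B'$ has $i\to j$ in $G(F)$, because $x_B-x_{B'}$ exceeds the minimum aggregated constant iff $x_i-x_j\ge a_{ij}^{(1)}$ for some such pair. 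Since each block lies inside a single strong component of $G(F)$, contracting them preserves the number of strong components. You need this contraction identification, not the incorrect ``subgraph'' picture, to close the reduction. With those two points supplied your reduction is sound; the paper's direct cone argument avoids both of them and is the more self-contained of the two.
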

\begin{proof}
	Suppose $\big(G_1(F),\ldots,G_l(F)\big)$ is the $l$-tuple of strong components of $G(F)$ induced by the linear ordering defined in $(\star)$. To prove $l(F)\le l$, we construct a subspace $W \subseteq \mathbb{R}^n$ as follows:
	\[
	W = \big\{(y_1,\dots,y_n)\in\mathbb{R}^n\mid  y_i = y_j \text{ if } i,j \text{ are in the same strong component of } G(F)\big\}.
	\]
	Trivially, the subspace $W$ has dimension $l$. Let $i_1 \in V\big(G_1(F)\big), \ldots, i_l \in V(G_l(F)\big)$ and $\bm x=(x_1,\ldots,x_n)\in F$. We take $\bm y=(y_1,\ldots,y_n)\in W$ satisfying that $y_{i}=x_{i_k}$ whenever $i\in V\big(G_k(F)\big)$. Then we have
	\[
	d(\bm x, W) \leq d(\bm x,\bm y)=\sqrt{\sum_{i=1}^{n}|x_{i}-y_{i}|^2}=\sqrt{\sum_{k=1}^{l}\sum_{i\in V(G_k(F))}|x_{i}-x_{i_k}|^2}.
	\]
	It follows from Lemma \ref{same-component} that $d(\bm x, W)$ is uniformly bounded for all $\bm x\in F$. Therefore, we get $l(F) \leq \dim(W) = l$.
	
	Conversely, to prove $l(F)\ge l$, we consider a cone $C \subseteq W$ defined by
	\[
	C=\big\{(y_1,\ldots,y_n)\in W\mid y_{i}\le y_{j}\text{ if }i\in V\big(G_p(F)\big), j\in V\big(G_q(F)\big) \text{ with }  p<q\big\}.
	\]
	Since $C$ is a cone, if $C \subseteq B(W', r)$ for some subspace $W'$ and positive number $r$, then $C \subseteq W'$. This implies that the cone $C$ has level $\dim(C) = l$. Take $\bm z=(z_1,\ldots,z_n)\in F$. According to \eqref{Inequ}, for any $\bm y = (y_1, \ldots, y_n) \in C$, and $i \in V\big(G_p(F)\big), j \in V\big(G_q(F)\big)$ with $p \leq q$, we have
	\[
	\begin{cases}
		(z_{i}+y_{i})-(z_{j}+y_{j})=z_{i}-z_{j}, &\text{ if } p=q;\\
		(z_{i}+y_{i})-(z_{j}+y_{j})\ge z_{i}-z_{j}>a_{ij}^{(m_{ij})},&\text{ if }p < q, i<j;\\
		(z_{i}+y_{i})-(z_{j}+y_{j})\le z_{j}-z_{i} < a_{ij}^{(1)},&\text{ if }p<q, i>j.
	\end{cases}
	\]
	So, for any hyperplane $H\in \A_{n}$, the points $\bm z$ and $\bm z+\bm y$ either both lie on $H$, or both lie in the same side of $H$. This indicates that $\bm z+\bm y\in F$, and hence
	\[
	\bm z+C=\{\bm z+\bm y\colon \bm y\in C\}\subseteq F.
	\]
	It follows that $C\subseteq B(F,|\bm z|)$. Note that \cite[Lemma 2.5]{CFWY2025} shows that for any sets $X$ and $Y$ in $\mathbb{R}^n$, if there exists $r>0$ such that $X\subseteq B(Y,r)$, then $l(X)\le l(Y)$. It follows that $l(F) \ge l(C) = l$. In conclusion, we arrive at $l(F)=l$.
\end{proof}

Note that for any $i,j \in [n]$, the poset isomorphism $L\big(\A_n^{\{i,j\}}\big)\cong L(\A_2)$ implies that the number of hyperplanes parallel to $x_i-x_j=0$ in $\A_n$ equals $|\A_2|$. Hence, each $\A_n$ has the following form
\[
\A_n\colon x_{i}-x_j=a_{ij}^{(1)},\ldots,a_{ij}^{(m)},\quad1\le i<j\le n,\; m\ge 1.
\]
Let $F$ be a face of $\A_n$ with level $l$, and $\bm{y} = (y_1, \ldots, y_n) \in F$. According to Proposition \ref{component-number}, we may assume $\big(G_1(F),\ldots,G_l(F)\big)$ is the $l$-tuple of all strong components of $G(F)$ induced by the linear ordering defined in $(\star)$. For each $p=1,\ldots,l$, let $V\big(G_p(F)\big)=\{s_1,\ldots,s_{n_p}\}$ with $s_1<\cdots<s_{n_p}$. By Theorem \ref{thm2}, the numbers $f_{d,l}(\A_{n_p})$ are determined by its intersection poset. Since $L(\A_{n_p}) \cong L(\A_n^{\{s_1, \ldots, s_{n_p}\}})$, we may identify $\A_{n_p}$ by
\begin{equation}\label{ESA}
x_i-x_j=a_{s_is_j}^{(1)},\ldots,a_{s_is_j}^{(m)},\quad 1\le i<j\le n_p,
\end{equation}
whose intersection poset is isomorphic to $L(\A_n^{\{s_1, \ldots, s_{n_p}\}})$. Consider a vector $\bm y^{(p)}$ in $\mathbb{R}^{n_p}$ by extracting the entries of $\bm y$ indexed by $V\big(G_p(F)\big)$, i.e., $\bm y^{(p)}=(y_{s_1},\ldots,y_{s_{n_p}})$. Then there exists a unique face $F_p$ of $\mathcal{A}_{n_p}$ such that $\bm y^{(p)} \in F_p$. It is clear that $G(F_p) \cong G_p(F)$, and hence $l(F_p) = 1$ by Proposition~\ref{component-number}. We now define a map 
\begin{equation}\label{bijection}
\phi: \mathcal{F}_{d,l}(\A_n) \longrightarrow\bigsqcup_{\substack{\pi = (\pi_{1},\ldots,\pi_{l})\in \mathrm{Par}_{l}[n], \\ d_{1}+\cdots+d_{l}=d}} \{\pi\} \times \mathcal{F}_{d_1,1}(\A_{|\pi_{1}|})\times\cdots\times \mathcal{F}_{d_l,1}(\A_{|\pi_{l}|})
\end{equation}
by $\phi(F) = (\pi, F_1, \ldots, F_l)$, where $\pi = \big(V\big(G_1(F)\big), \ldots, V\big(G_l(F)\big)\big)$ and $\mathrm{Par}_l[n]$ denotes the set of all ordered $l$-partitions of $[n]$. The following theorem states that $\phi$ is bijective.
\begin{theorem}\label{Bijection}
Let $(\A_1,\A_2,\ldots)$ be an ESA. Then the map $\phi$ defined as \eqref{bijection} is a bijection.
\end{theorem}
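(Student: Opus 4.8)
The plan is to show that $\phi$ is a bijection by producing an explicit inverse $\psi$ from the disjoint union on the right-hand side of \eqref{bijection} back to $\mathcal{F}_{d,l}(\A_n)$, and then checking $\phi\circ\psi=\mathrm{id}$ and $\psi\circ\phi=\mathrm{id}$. I would work throughout with the normalized form $\A_n\colon x_i-x_j=a_{ij}^{(1)},\ldots,a_{ij}^{(m)}$ fixed just before \eqref{ESA}, and use the structural facts $(\star)$ and \eqref{Inequ}, Proposition~\ref{component-number}, and the identification \eqref{ESA} of $\A_{|\pi_p|}$ with the corresponding parallel subarrangement. To build $\psi$, take a tuple $(\pi,F_1,\ldots,F_l)$ from the target with $\pi=(\pi_1,\ldots,\pi_l)$, write each block as $\pi_p=\{s_1^{(p)}<\cdots<s_{n_p}^{(p)}\}$, choose a representative $\bm y^{(p)}=(y_1^{(p)},\ldots,y_{n_p}^{(p)})\in F_p$, pick reals $M_1>M_2>\cdots>M_l$ whose consecutive gaps all exceed $2\max_{p,k}|y_k^{(p)}|+2a$ where $a=\max\{|a_{ij}^{(k)}|\}$, form $\bm y\in\mathbb{R}^n$ by $y_{s_k^{(p)}}=y_k^{(p)}+M_p$, and set $\psi(\pi,F_1,\ldots,F_l)$ to be the face of $\A_n$ containing $\bm y$. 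The first thing to verify is that this face does not depend on the choices of the $\bm y^{(p)}$ or of the $M_p$: for $i,j$ in a common block the shift $M_p$ cancels and any two choices of $\bm y^{(p)}$ lie in $F_p$, hence on the same side of each within-block hyperplane; for $i,j$ in different blocks the gap condition forces $|y_i-y_j|>a$, so $\bm y$ sits strictly on the same extreme side of every cross-block hyperplane.

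Next I would check $\phi\circ\psi=\mathrm{id}$, which simultaneously confirms that $\psi$ really takes values in $\mathcal{F}_{d,l}(\A_n)$. For $F=\psi(\pi,F_1,\ldots,F_l)$, the inequalities just used show that $G(F)$ restricted to a block $\pi_p$ agrees, under $s_k^{(p)}\leftrightarrow k$, with $G(F_p)$; since $l(F_p)=1$, Proposition~\ref{component-number} makes $G(F_p)$ strongly connected, so each $\pi_p$ lies inside one strong component of $G(F)$. For $i\in\pi_p$ and $j\in\pi_q$ with $p<q$ the gap condition makes $y_i-y_j$ large and positive, and a short case split on whether $i<j$ or $i>j$, using the rule defining the edges of $G(F)$, shows the unique edge between $i$ and $j$ is $i\to j$; hence the strong components of $G(F)$ are precisely $\pi_1,\ldots,\pi_l$, the reduced digraph $\bar G(F)$ is the transitive tournament with $\pi_p\to\pi_q$ for all $p<q$, and the ordering delivered by $(\star)$ is $(\pi_1,\ldots,\pi_l)$. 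Extracting the coordinates of $\bm y$ indexed by $\pi_p$ returns $\bm y^{(p)}+M_p\bm{1}$, which lies in the same face $F_p$ of $\A_{|\pi_p|}$ as $\bm y^{(p)}$ because the arrangement \eqref{ESA} is translation invariant. This gives $\phi(F)=(\pi,F_1,\ldots,F_l)$; moreover $l(F)=l$ by Proposition~\ref{component-number}, and $\dim F=\sum_p\dim F_p=d$ since no cross-block hyperplane contains $F$, so the affine hull of $F$ is cut out only by within-block equalities. Thus $F\in\mathcal{F}_{d,l}(\A_n)$.

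Finally, for $\psi\circ\phi=\mathrm{id}$, I would start from $F\in\mathcal{F}_{d,l}(\A_n)$ with $\phi(F)=(\pi,F_1,\ldots,F_l)$, fix $\bm x\in F$, and run $\psi$ with the representatives $\bm y^{(p)}=\bm x|_{\pi_p}\in F_p$. The output point $\bm y$ then satisfies $y_i-y_j=x_i-x_j$ whenever $i,j$ lie in a common block, so $\bm x$ and $\bm y$ are on the same side of every within-block hyperplane; and for $i,j$ in different blocks, \eqref{Inequ} already places $\bm x$ on the extreme side of the relevant hyperplanes, which is precisely the side the gap condition forces on $\bm y$. Hence $\bm x$ and $\bm y$ lie in the same face, i.e.\ $\psi(\phi(F))=F$, completing the proof that $\phi$ is a bijection. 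I expect the main obstacle to be purely bookkeeping: the case distinction on the integer order of $i$ and $j$ in the cross-block edge computation, and the argument that the glued point reproduces $G(F_p)$ on each block while creating no new strong connections; beyond that, everything is a direct consequence of Proposition~\ref{component-number} and of the facts $(\star)$ and \eqref{Inequ} already in place.
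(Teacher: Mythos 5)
Your proposal is correct and is essentially the paper's own argument repackaged: the paper proves well-definedness (the dimension count via a bijection between the hyperplanes containing $F$ and those containing the $F_p$'s), injectivity (by separating hypothetical $F\neq F'$ with a hyperplane and deriving a contradiction from \eqref{Inequ}), and surjectivity (by concatenating translated representatives $\bm y^{(p)}$ so that cross-block gaps dominate all the constants $a_{ij}^{(k)}$); your $\psi$ is exactly the surjectivity construction, $\phi\circ\psi=\mathrm{id}$ is the paper's verification that the constructed point maps back to the given tuple, and $\psi\circ\phi=\mathrm{id}$ encodes the injectivity argument. The only presentational difference is that the paper spells out the dimension equality $\dim F=\sum_p\dim F_p$ directly via the hyperplane correspondence, whereas you deduce it indirectly from the inverse relations; both are sound.
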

\begin{proof}
We first verify that the map $\phi$ is well-defined. The preceding argument shows that every $F_p$ is a level-$1$ face of the corresponding arrangement $\A_{|\pi_p|}$. Therefore, it remains only to prove that
\begin{equation}\label{Dim}
	\dim(F)=\dim(F_1)+\cdots+\dim(F_l).
\end{equation}
For any $1\le i<j\le n$, on the one hand,
since $F$ is a face of $\mathcal{A}_n$, we have
\[
y_i -y_j = a \in \{a_{ij}^{(1)},\ldots,a_{ij}^{(m)}\} \Longleftrightarrow F \subseteq H : x_i - x_j = a \in \mathcal{A}_n,
\]
in which case the digraph $G(F)$ contains both directed edges $i\to j$ and $j\to i$, that is, the vertices $i$ and $j$ belong to the same strong component $G_p(F)$ of $G(F)$.
On the other hand, by the definition of $\bm{y}^{(p)}$, we have
\[
y_i -y_j = a \in \{a_{ij}^{(1)},\ldots,a_{ij}^{(m)}\} \Longleftrightarrow F_p \subseteq H \colon x_{t_{i}} - x_{t_j} = a \in \mathcal{A}_{n_p},
\]
where $V\big(G_p(F)\big) = \{s_1, \ldots, s_{n_p}\}$ with $s_1<\cdots<s_{n_p}$. We assume $s_{t_i} = i, s_{t_j} = j$.
This yields an explicit one-to-one correspondence between the set $\{H\in\mathcal A_n\mid F\subseteq H\}$ and $\bigsqcup_{p=1}^l\{ H\in\mathcal A_{|\pi_p|}\mid F_p\subseteq H\}$ that sends each $H: x_i -x_j =a_{ij}^{(k)}$ in $\mathcal{A}_n$ containing $F$ to the hyperplane $x_{t_i}-x_{t_j}=a_{ij}^{(k)}$ in $\mathcal A_{n_p}$ containing $F_p$. In addition, we have
\[
\dim(F)=\dim\Big(\bigcap_{F\subseteq H,\, H\in\mathcal A_n} H\Big) \quad\And\quad \dim(F_p)=\dim\Big(\bigcap_{F_p\subseteq H,\, H\in\mathcal A_{|\pi_p|}} H\Big).
\]
Therefore, \eqref{Dim} holds, and hence $\phi$ is well-defined.

Next, we prove that $\phi$ is injective. Arguing by contradiction, suppose $F$ and $F^\prime$ are two faces of $\A_n$ such that $\phi(F)=\phi(F^\prime) = (\pi,F_1,\ldots,F_l)$. Let $(y_{1},\ldots,y_n)\in F$ and $ (y^{\prime}_1,\ldots,y^{\prime}_n)\in F^{\prime}$. Then $F$ and $F^\prime$ are separated by a hyperplane $H: x_i-x_j=a_{ij}^{(k)}$ in $\A_n$ for some $k=1,\ldots,m$. 
Thus, we may assume
\[
y_{i}-y_{j}<a_{ij}^{(k)}\quad\And\quad y^\prime_{i}-y^\prime_{j}\geq a_{ij}^{(k)}.
\]
If $i,j \in \pi_p$ for some $p$, we assume $\pi_p = \{s_1,\ldots,s_{n_p}\}$ with $s_1<\cdots<s_{n_p}$. Then $(y_{s_1}, \ldots, y_{s_{n_p}})$ and $(y_{s_1}^\prime, \ldots, y_{s_{n_p}}^\prime)$ belong to the same face $F_p$ of $\A_{n_p}$. It follows that $y_i-y_j<a_{ij}^{(k)}$ if and only if  $y^\prime_{i}-y^\prime_{j}<a_{ij}^{(k)}$, a contradiction. If $i \in \pi_p$, $j \in \pi_q$ for some $p \neq q$, then by \eqref{Inequ}, we have
	\[
	\begin{cases}
		y_{i}-y_{j}> a_{ij}^{(m)} \geq a_{ij}^{(k)} , & \text{if}\;\; p<q; \\ y_{j}^\prime-y_{j}^\prime <a_{ij}^{(1)} \leq a_{ij}^{(k)}, & \text{if}\;\; p>q,
	\end{cases}
	\]
	which also leads to a contradiction. Consequently, the map $\phi$ is an injection.
	
Finally, we show that $\phi$ is a surjection. Let $\pi = (\pi_1, \ldots, \pi_l) \in \mathrm{Par}_l[n]$ and $F_p$ be a face of $\A_{|\pi_{p}|}$ with $l(F_p) = 1$ for each $p=1,\ldots,l$. Take $y^{(p)} = (y^{(p)}_1, \ldots, y^{(p)}_{|\pi_p|}) \in F_p$. Note that $y^{(p)} + t(1,\ldots,1) \in F_p$ holds for any $t \in \mathbb{R}$, hence for any $p<q$, we may assume that 
	\begin{equation}\label{1.1a}
		\min_{i,j}\{y^{(p)}_i - y^{(q)}_j\} >\max\big\{|a_{12}^{(1)}|,\ldots,|a_{12}^{(m)}|,\ldots,|a_{(n-1)n}^{(1)}|,\ldots,|a_{(n-1)n}^{(m)}|\big\}.
	\end{equation}
	Let $\bm x$ be the $n$-dimensional vector obtained by concatenating $y^{(1)}, \ldots, y^{(l)}$ such that the components of $y^{(p)}$ are assigned sequentially to the positions indexed by $\pi_p$ for each $p \in \{1,\ldots,l\}$. Then there exists a unique face $F$ of $\mathcal{A}_n$ such that $\bm x \in F$.
	Let $G_p(F)$ be the subdigraph of $G(F)$ induced by the vertices set $\pi_p$, then $G_p(F) \cong G(F_p)$, and hence $G_p(F)$ is strong connected. Furthermore, for any $i \in \pi_p, j \in \pi_q$, if follows from \eqref{1.1a} that there is no directed path from $j$ to $i$ if $p <q$, and no directed path from $i$ to $j$ if $p > q$ in $G(F)$. As a result, $\big(G_1(F),\ldots,G_l(F)\big)$ is the tuple of  strong components of $G(F)$ induced by the linear ordering defined in $(\star)$. That is, $\phi(F) = (\pi, F_1, \ldots, F_l)$.
	
Summarising the preceding arguments, we have shown that $\phi$ is a bijection. 
\end{proof}
As a direct application of Theorem \ref{Bijection}, we give a bijective proof of  the identity $F_{l}(x,y)=\big(F_{1}(x,y)\big)^l$ in Theorem \ref{thm1}.	
\begin{proof}[Bijective proof of Theorem \ref{thm1}]
From the bijection $\phi$ in Theorem \ref{Bijection}, we have 
	\begin{align*}
		f_{d,l}(\A_n)=\sum_{\substack{n_{1}+\cdots+n_{l}=n,\\d_{1}+\cdots+d_{l}=d,\\n_{i}\ge d_{i}\ge 1}}\binom{n}{n_{1}\ldots n_{l}}\prod_{i=1}^{l}f_{d_{i},1}(\A_{n_{i}}).
	\end{align*}
It follows that
	\begin{align*}
		F_{l}(x,y)&=\sum_{n\ge d\ge l}f_{d,l}(\A_{n})x^{n-d}\frac{y^n}{n!}\\
		&=\prod_{i=1}^{l}\sum_{n_{i}\ge d_{i}\ge 1}f_{d_i,1}(\A_{n_i})x^{n_i -d_i}\frac{y^{n_i}}{n_{i}!}\\ &=\big(F_{1}(x,y)\big)^l.
	\end{align*}
	This completes the proof.
\end{proof}
\section*{Acknowledgements}
This paper is supported by National Natural Science Foundation of China (Grant No. 12571350) and Guangdong Basic and Applied Basic Research Foundation (Grant No. 2025A1515010457).


\begin{thebibliography}{99}\setlength{\itemsep}{-.0mm}
		\bibitem{A-R2012}
		D. Armstrong, B. Rhoades. The Shi arrangement and the Ish arrangement. Trans. Amer. Math. Soc. 364 (2012), 1509--1528.
		
		
		\bibitem{Athan1998}
		C. Athanasiadis. Deformations of Coxeter hyperplane arrangements and their characteristic polynomials. Arrangements-Tokyo 1998, 1--26, Adv. Stud. Pure Math. 27, Kinokuniya, Tokyo, 2000.
		
		
		
		\bibitem{CFWY2025}
		Y. Chen, H. Fu, S. Wang, J. Yang. Level of regions for deformed braid arrangements. J. Combin. Theory Ser. A 217 (2026), Paper No. 106077.
		
		\bibitem{CWYZ2024}
		Y. Chen, S. Wang, J. Yang, C. Zhao. Regions of level $\ell$ of Catalan/semiorder-type arrangements. arXiv:2410.10198v2 (2024).
		
		
		\bibitem{Ehrenborg2019}
		R. Ehrenborg. Counting faces in the extended Shi arrangement. Adv. in Appl. Math. 109 (2019), 55--64.
		
		
		
		\bibitem{Hetyei2024}
		G. Hetyei. Labeling regions in deformations of graphical arrangements. arXiv: 2312.06523v3 (2024).
		
		
		
		\bibitem{P-S2000}
		A. Postnikov, R. P. Stanley. Deformations of Coxeter hyperplane arrangements. In memory of Gian-Carlo Rota. J. Combin. Theory Ser. A  91 (2000), 544--597.
		
		
		\bibitem{SSZ2025}
		F. Southerland, L. Southern, S. Zhou. Region level via centralization for hyperplane arrangements and beyond. arXiv:2511.09653 (2025).
		
		
		\bibitem{Stanley1996}
		R. P. Stanley. Hyperplane arrangements, interval orders, and trees. Proc. Nat. Acad. Sci. U.S.A. 93 (1996), 2620--2625.

		
		\bibitem{Stanley2007}
		R. P. Stanley. An introduction to hyperplane arrangements. In Geometric  Combinatorics, 389--496, IAS/Park City Math. Ser. 13, Amer. Math. Soc. Providence, RI, 2007.
		
		
		\bibitem{West}
		D. B. West. Introduction to Graph Theory. Prentice Hall, Upper Saddle River, NJ, 1996.
		
		\bibitem{Zaslavsky1975}
		T. Zaslavsky. Facing up to arrangements: face-count formulas for partitions of space by hyperplanes. Mem. Amer. Math. Soc. 1 (1975), issue 1, no. 154, vii+102.
		
		\bibitem{Zaslavsky2003}
		T. Zaslavsky. Faces of a hyperplane arrangement enumerated by ideal dimension, with application to plane, plaids, and Shi. Geom. Dedicata 98 (2003), 63--80.
		
		\bibitem{Zhang2025}
		N. Zhang. Characteristic polynomials of deformations of Coxeter arrangements via levels of regions. S\'em. Lothar. Combin. 93B (2025), Art. 93B, 12 pp.
	\end{thebibliography}
\end{document}